\newtheorem{theorem}{Theorem}
\newtheorem{remark}[theorem]{Remark}
\newtheorem{proposition}[theorem]{Proposition}
\DeclareMathOperator*{\dom}{dom}              %
\DeclareMathOperator*{\divergenz}{div}              %
\DeclareMathOperator*{\ints}{int}         %
\DeclareMathOperator*{\essinf}{ess ~inf}         %
\DeclareMathOperator*{\ww}{w}         %
\DeclareMathOperator*{\Ss}{S}         %
\newcommand{\N}{\mathbb{N}}
\newcommand{\R}{\mathbb{R}}
\newcommand{\Lp}[1]{L^{#1}(\Omega)}
\newcommand{\Wpzero}[1]{W^{1,#1}_0(\Omega)}
\newcommand{\lan}{\langle}
\newcommand{\ran}{\rangle}
\newcommand{\eps}{\varepsilon}
\newcommand{\ph}{\varphi}
\newcommand{\into}{\int_{\Omega}}
\newcommand{\weak}{\overset{\ww}{\to}}
\newcommand{\Linf}{L^{\infty}(\Omega)}
\newcommand{\close}{\overline{\Omega}}
\newcommand{\interior}{\ints \left(C^1_0(\overline{\Omega})_+\right)}
\newcommand{\interiorK}{\ints K_+}
\newcommand{\cprime}{$'$}
\renewcommand{\l}{\left}
\renewcommand{\r}{\right}
\numberwithin{theorem}{section}
\numberwithin{equation}{section}
\title[$(p,q)$-equations with singular and concave convex nonlinearities]{$(p,q)$-Equations with singular and concave convex nonlinearities}
\author[N.\,S.\,Papageorgiou]{Nikolaos S.\,Papageorgiou}
\address[N.\,S.\,Papageorgiou]{National Technical University, Department of Mathematics, Zografou Campus, Athens 15780, Greece}
\email{npapg@math.ntua.gr}
\author[P.\,Winkert]{Patrick Winkert}
\address[P.\,Winkert]{Technische Universit\"{a}t Berlin, Institut f\"{u}r Mathematik, Stra\ss e des 17.\,Juni 136, 10623 Berlin, Germany}
\email{winkert@math.tu-berlin.de}
\subjclass[2010]{Primary: 35J20, Secondary: 35J75, 35J92}
\keywords{Singular and concave-convex terms, nonlinear regularity theory, nonlinear maximum principle, strong comparison theorems, minimal positive solution}
\begin{document}

\begin{abstract}
      We consider a nonlinear Dirichlet problem driven by the $(p,q)$-Laplacian with $1<q<p$. The reaction is parametric and exhibits the competing effects of a singular term and of concave and convex nonlinearities. We are looking for positive solutions and prove a bifurcation-type theorem describing in a precise way the set of positive solutions as the parameter varies. Moreover, we show the existence of a minimal positive solution and we study it as a function of the parameter.
\end{abstract}

\maketitle

\section{Introduction}

Let $\Omega \subseteq \R^N$  be a bounded domain with a $C^2$-boundary $\partial \Omega$. In this paper, we study the following parametric Dirichlet $(p,q)$-equation
\begin{align}\tag{P$_\lambda$}\label{problem}
  \begin{split}
    &-\Delta_p u-\Delta_q u = \lambda \left[u^{-\eta}+a(x) u^{\tau-1}\right]+f(x,u)\quad \text{in } \Omega \\
    &u\big|_{\partial \Omega}=0, \quad u>0, \quad \lambda>0,\quad 1<\tau<q<p, \quad 0<\eta<1.
   \end{split}
\end{align}
For $r\in (1,\infty)$ we denote by $\Delta_r$ the $r$-Laplace differential operator defined by
\begin{align*}
    \Delta_r u =\divergenz \left(|\nabla u|^{r-2} \nabla u\right)\quad\text{for all }u \in \Wpzero{r}.
\end{align*}
The perturbation in problem \eqref{problem}, namely $f\colon \Omega\times\R\to\R$, is a Carath\'{e}odory function, that is, $f$ is measurable in the first argument and continuous in the second one. We suppose that $f(x,\cdot)$ is $(p-1)$-superlinear near $+\infty$ but it does not satisfy the well-known Ambrosetti-Rabinowitz condition which we will write AR-condition for short. Hence, we have in problem \eqref{problem} the combined effects of singular terms (the function $s\to \lambda s^{-\eta}$), of sublinear (concave) terms (the function $s\to \lambda s^{\tau-1}$ since $1<\tau<q<p$) and of superlinear (convex) terms (the function $s\to f(x,s)$). For the precise conditions on $f$ we refer to hypotheses H($f$) in Section \ref{section_2}. Consider the following two functions (for the sake of simplicity we drop the $x$-dependence)
\begin{align*}
f_1(s)=\left(s^+\right)^{r-1}, \quad p<r<p^*, \qquad
f_2(s) =
\begin{cases}
\left(s^+\right)^l&\text{if }s\leq 1,\\
s^{p-1} \ln(s)+1&\text{if }1<s,
\end{cases}
\quad q<l.
\end{align*}
Both functions satisfy our hypotheses H($f$) but only $f_1$ satisfies the AR-condition.

We are looking for positive solutions and we establish the precise dependence of the set of positive solutions of \eqref{problem} on the parameter $\lambda>0$ as the latter varies. For the weight $a(\cdot)$ we suppose the following assumptions
\begin{enumerate}
	\item[H($a$):]
	$a\in\Linf$, $a(x)\geq a_0>0$ for a.\,a.\,$x\in\Omega$;
\end{enumerate}

The main result in this paper is the following one.
\begin{theorem}
	If hypotheses H($a$) and H($f$) hold, then there exists $\lambda^*\in (0,+\infty)$ such that
	\begin{enumerate}
		\item[(a)]
		for all $\lambda \in \l(0,\lambda^*\r)$, problem \eqref{problem} has at least two positive solutions
		\begin{align*}
		u_0, \hat{u} \in \interior \text{ with }u_0\leq \hat{u} \text{ and }u_0\neq \hat{u};
		\end{align*}
		\item[(b)]
		for $\lambda=\lambda^*$, problem \eqref{problem} has at least one positive solution $u^*\in \interior$;
		\item[(c)]
		for $\lambda>\lambda^*$, problem \eqref{problem} has no positive solution;
		\item[(d)]
		for every $\lambda \in \mathcal{L}=\l(0,\lambda^*\r]$, problem \eqref{problem} has a smallest positive solution $u^*_\lambda \in \interior$ and the map $\lambda \to u^*_\lambda$ from $\mathcal{L}$ into $C^1_0(\close)$ is strictly increasing, that is, $0<\mu<\lambda\leq \lambda^*$ implies $u^*_\lambda-u^*_\mu\in \interior$ and it is left continuous.
	\end{enumerate}
\end{theorem}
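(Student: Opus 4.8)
The plan is to organize everything around the admissible set $\mathcal{L}=\{\lambda>0:\eqref{problem}\text{ has a positive solution}\}$ and to prove that it is a nonempty, bounded, left‑closed interval, producing the multiplicity and minimality statements along the way. The pervasive difficulty is the singular term $u^{-\eta}$, which prevents the natural energy functional from being $C^1$. I would neutralize it by first solving the purely singular–concave auxiliary problem $-\pl u-\Delta_q u=\lambda\l[u^{-\eta}+a(x)u^{\tau-1}\r]$ in $\Wpzero{p}$. Since the $(p,q)$-Laplacian is strictly monotone, this auxiliary problem has a unique positive solution $\underline{u}_\lambda\in\interior$, and under the sign condition in H($f$) (namely $f(x,\cdot)\geq 0$ on $\R_+$, as both model nonlinearities satisfy) this $\underline{u}_\lambda$ is a lower solution of \eqref{problem}. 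Freezing the singularity at $\underline{u}_\lambda$ — replacing $u^{-\eta}$ by $\underline{u}_\lambda^{-\eta}$ below $\underline{u}_\lambda$ — yields a $C^1$ truncated functional to which variational methods apply.

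For the first solution $u_0$ in part (a) I would truncate $f$ from above to restore coercivity, using that the concave exponent $\tau<q<p$ dominates near the origin, and minimize the resulting functional over $\Wpzero{p}$; the minimizer satisfies $u_0\geq\underline{u}_\lambda$, so both truncations are inactive and $u_0$ solves \eqref{problem}, with $u_0\in\interior$ by the nonlinear regularity theory and the maximum principle. For the second solution $\hat{u}$ I would truncate at $u_0$ from below so that $u_0$ becomes a local minimizer of the associated functional, and then exploit the $(p-1)$-superlinearity of $f$ near $+\infty$ to produce mountain–pass geometry. The critical point here is verifying the Cerami condition: because the AR-condition is \emph{not} assumed, compactness must be extracted from the weaker structural hypotheses H($f$), and this is one of the two main technical hurdles. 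The mountain pass critical point satisfies $\hat{u}\geq u_0$, $\hat{u}\neq u_0$, and lies in $\interior$ as well.

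To locate the interval I would show $\mathcal{L}$ is downward closed: if $\lambda\in\mathcal{L}$ with solution $u_\lambda$ and $0<\mu<\lambda$, then $u_\lambda$ is an upper solution of problem \eqref{problem} at parameter $\mu$, and solving between $\underline{u}_\mu$ and $u_\lambda$ gives $\mu\in\mathcal{L}$; set $\lambda^*:=\sup\mathcal{L}$. Finiteness $\lambda^*<\infty$ follows by eigenvalue comparison: for $\lambda$ large, the reaction dominates $\lamp\,s^{p-1}$ strictly for all $s>0$ (the concave term $\lambda a_0 s^{\tau-1}$ handles intermediate values while the superlinearity of $f$ handles large $s$), and a Picone-type argument tested against the first eigenfunction $\hat{u}_1$ of $-\pl$ yields a contradiction with existence. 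For part (b), taking $\lambda_n\uparrow\lambda^*$ with corresponding solutions $u_n\geq\underline{u}_{\lambda_1}>0$, a priori bounds together with compactness give $u_n\to u^*$ solving \eqref{problem} at $\lambda^*$; the uniform lower bound keeps the limit away from zero and controls the singular term, so $\lambda^*\in\mathcal{L}$, whence $\mathcal{L}=\l(0,\lambda^*\r]$, which also yields (c).

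For part (d) I would first establish that the solution set at each $\lambda$ is downward directed, then extract the smallest solution $u^*_\lambda\in\interior$ as the limit of a decreasing minimizing sequence via monotone convergence and compactness in $C^1_0(\close)$. The main obstacle of the whole argument lies in the strict monotonicity $u^*_\lambda-u^*_\mu\in\interior$ for $\mu<\lambda$: since the parameter enters only the nonnegative singular and concave terms, $u^*_\mu$ is a \emph{strict} lower solution at $\lambda$, and I would invoke the strong comparison theorem for the $(p,q)$-Laplacian — the most delicate tool, as comparison for the two–operator problem is considerably subtler than in the single-exponent case. Left continuity then follows by letting $\mu\uparrow\lambda$: the increasing family $\l(u^*_\mu\r)$ is bounded above by $u^*_\lambda$, converges in $C^1_0(\close)$ to a solution that sits below $u^*_\lambda$ by monotonicity and above it by minimality, and hence coincides with it.
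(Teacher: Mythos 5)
Your overall architecture is sound and largely parallels the paper: neutralize the singularity through an auxiliary problem whose solution is a lower bound for all positive solutions, produce the first solution by truncation and minimization, the second by a mountain pass for which the Cerami condition must be checked without the AR--condition, show $\mathcal{L}$ is downward closed, pass to the limit $\lambda_n\uparrow\lambda^*$ for part (b), and obtain the minimal solution from downward directedness, with the strong comparison principle giving strict monotonicity in (d). Two remarks on genuinely different choices before the main objection. First, the paper's auxiliary problem carries only the concave term $\lambda a(x)u^{\tau-1}$ (no singular term); its energy functional is then honestly $C^1$ and coercive, uniqueness follows from a D\'{\i}az--Sa\'{a} argument, and the Lazer--McKenna lemma is invoked only afterwards to get $\tilde{u}_\lambda^{-\eta}\in L^s(\Omega)$. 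Your auxiliary problem keeps $u^{-\eta}$, whose energy functional is \emph{not} $C^1$ on $\Wpzero{p}$, so minimization does not directly yield the Euler--Lagrange equation; this is repairable but requires extra work that you do not acknowledge. Also, uniqueness for your auxiliary problem does not follow from "strict monotonicity of the $(p,q)$-Laplacian," since the right-hand side contains the increasing term $a(x)u^{\tau-1}$; one needs the D\'{\i}az--Sa\'{a}/discrete hidden convexity argument exploiting that $s\mapsto (s^{-\eta}+a s^{\tau-1})/s^{q-1}$ is strictly decreasing (here $\tau<q$ is essential). Similarly, in (d) you compare $u^*_\mu$ and $u^*_\lambda$ directly; minimal solutions at different parameters are not a priori ordered, so one must first produce \emph{some} $u_\mu\in\mathcal{S}_\mu$ with $u^*_\lambda-u_\mu\in\interior$ (via the truncation in $[\tilde{u}_\mu,u^*_\lambda]$ plus strong comparison) and then use $u^*_\mu\leq u_\mu$, which is how the paper argues.

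The genuine gap is your proof that $\lambda^*<\infty$. The Picone-type argument tested against the first eigenfunction $\hat{u}_1$ of $\left(-\Delta_p,\Wpzero{p}\right)$ works for the pure $p$-Laplacian but does not transfer to the $(p,q)$-operator: testing the equation with $\hat{u}_1^p/u^{p-1}$ controls $\langle A_p(u),\hat{u}_1^p/u^{p-1}\rangle$ by $\hat{\lambda}_1(p)$ via Picone, but the additional term $\langle A_q(u),\hat{u}_1^p/u^{p-1}\rangle$ has no bound independent of $u$ (the generalized Picone inequalities for $q<p$ produce factors of $(\hat{u}_1/u)^{p-q}$), and there is no variational first eigenvalue for $-\Delta_p-\Delta_q$ to compare against. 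As stated, the contradiction does not close. The paper circumvents this entirely: it fixes $\tilde{\lambda}$ with $\tilde{\lambda}a(x)s^{\tau-1}+f(x,s)\geq s^{p-1}$ for all $s\geq0$, takes a subdomain $\Omega_0\subset\subset\Omega$, sets $m_0=\min_{\overline{\Omega}_0}u_\lambda>0$, and compares $u_\lambda$ with the constants $m_0+\delta$ through the singular strong comparison principle on $\Omega_0$, reaching a contradiction with the definition of $m_0$. You should replace your eigenvalue comparison by an argument of this localized type (or supply a genuine $(p,q)$-Picone estimate), since this is the one step of your outline that would fail as written.
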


The study of elliptic problems with combined nonlinearities was initiated with the seminal paper of Ambrosetti-Brezis-Cerami \cite{Ambrosetti-Brezis-Cerami-1994} who studied semilinear Dirichlet equations driven by the Laplacian without any singular term. Their work has been extended to nonlinear problems driven by the $p$-Laplacian by Garc\'{\i}a Azorero-Peral Alonso-Manfredi \cite{Garcia-Azorero-Manfredi-Peral-Alonso-2000} and Guo-Zhang \cite{Guo-Zhang-2003}. In both works there is no singular term and the reaction has the special form
\begin{align*}
    x\to \lambda s^{\tau-1}+s^{r-1} \quad\text{for all $s \geq 0$ with }1<\tau<p<r<p^*,
\end{align*}
where $p^*$ is the critical Sobolev exponent to $p$ given by
\begin{align*}
    p^*=
    \begin{cases}
	\frac{Np}{N-p} &\text{if }p<N,\\
	+\infty &\text{if } N \leq p.
    \end{cases}
\end{align*}

More recently there have been generalizations involving more general nonlinear differential operators, more general concave and convex nonlinearities and different boundary conditions. We refer to the works of Papageorgiou-R\u{a}dulescu-Repov\v{s} \cite{Papageorgiou-Radulescu-Repovs-2017} for Robin problems and Papageorgiou-Winkert \cite{Papageorgiou-Winkert-2016}, Leonardi-Papa\-georgiou \cite{Leonardi-Papageorgiou-2019} and Marano-Marino-Papageorgiou \cite{Marano-Marino-Papageorgiou-2019} for Dirichlet problems. None of these works involves a singular term. Singular equations driven by the $p$-Laplacian and with a superlinear perturbation were investigated by Papageorgiou-Winkert \cite{Papageorgiou-Winkert-2019}.

We mention that $(p,q)$-equations arise in many mathematical models of physical processes. We refer to Benci-D'Avenia-Fortunato-Pisani \cite{Benci-DAvenia-Fortunato-Pisani-2000} for quantum physics and Cherfils-Il\cprime yasov \cite{Cherfils-Ilyasov-2005} for reaction diffusion systems.

Finally, we mention recent papers which are very close to our topic dealing with certain types of nonhomogeneous and/or singular problems. We refer to Papageorgiou-R\u{a}dulescu-Repov\v{s} \cite{Papageorgiou-Radulescu-Repovs-2020, Papageorgiou-Radulescu-Repovs-2019d}, Papageorgiou-Zhang \cite{Papageorgiou-Zhang-2021} and Ragusa-Tachikawa \cite{Ragusa-Tachikawa-2020}.

\section{Preliminaries and Hypotheses} \label{section_2}

We denote by $\Lp{p}$ $\l( \text{or } L^p\l(\Omega; \R^N\r)\r)$ and $\Wpzero{p}$ the usual Lebesgue and Sobolev spaces with their norms $\|\cdot\|_{p}$ and $\|\cdot\|$, respectively. By means of the Poincar\'{e} inequality we have
\begin{align*}
   \|u\|= \|\nabla u\|_p \quad \text{for all }u \in \Wpzero{p}.
\end{align*}
For $s \in \R$, we set $s^{\pm}=\max\{\pm s,0\}$ and for $u \in W^{1,p}_0(\Omega)$ we define $u^{\pm}(\cdot)=u(\cdot)^{\pm}$. It is known that
\begin{align*}
   u^{\pm} \in W^{1,p}_0(\Omega), \quad |u|=u^++u^-, \quad u=u^+-u^-.
\end{align*}
Furthermore, we need the ordered Banach space
\begin{align*}
   C^1_0(\overline{\Omega})= \left\{u \in C^1(\overline{\Omega}): u\big|_{\partial \Omega}=0 \right\}
\end{align*}
and its positive cone
\begin{align*}
   C^1_0(\overline{\Omega})_+=\left\{u \in C^1_0(\overline{\Omega}): u(x) \geq 0 \text{ for all } x \in \overline{\Omega}\right\}.
\end{align*}
This cone has a nonempty interior given by
\begin{align*}
   \ints \left(C^1_0(\overline{\Omega})_+\right)=\left\{u \in C^1_0(\overline{\Omega})_+: u(x)>0 \text{ for all } x \in \Omega \text{, } \frac{\partial u}{\partial n}(x)<0 \text{ for all } x \in \partial \Omega \right\},
\end{align*}
where $n(\cdot)$ stands for the outward unit normal on $\partial \Omega$. We will also use two more open cones. The first one is an open cone in the space $C^1(\close)$ and is defined by
\begin{align*}
   D_+&=\left\{u \in C^1(\overline{\Omega})_+: u(x)>0 \text{ for all } x\in \Omega, \ \frac{\partial u}{\partial n}\bigg|_{\partial \Omega \cap u^{-1}(0)}<0 \right\}.
\end{align*}
The second open cone is the interior of the order cone 
\begin{align*}
    K_+=\left\{u\in C_0(\close): u(x) \geq 0 \text{ for all }x\in\close\right\}
\end{align*}
of the Banach space
\begin{align*}
    C_0(\close)=\left\{u\in C(\close) : u\big|_{\partial\Omega}=0\right\}.
\end{align*}
We know that
\begin{align*}
    \interiorK=\left\{ u \in K_+: c_u \hat{d} \leq u \text{ for some }c_u>0\right\}
\end{align*}
with $\hat{d}(\cdot)=d(\cdot,\partial\Omega)$. Let $\hat{u}_1$ denote the positive $L^p$-normalized, that is, $\|\hat{u}_1\|_p=1$, eigenfunction of $\left(-\Delta_p,\Wpzero{p}\right)$. We know that $\hat{u}_1\in\interior$. From Papageorgiou-R\u{a}dulescu-Repov\v{s} \cite{Papageorgiou-Radulescu-Repovs-2019a} we have
\begin{align*}
    c_u \hat{d}\leq u \text{ for some }c_u>0 \quad\text{ if and only if }\quad \hat{c}_u\hat{u}_1 \leq u \text{ for some }\hat{c}_u>0.
\end{align*}

Given $u,v\in\Wpzero{p}$ with $u(x)\leq v(x)$ for a.\,a.\,$x\in\Omega$ we define
\begin{align*}
    [u,v]&=\left\{y\in\Wpzero{p}: u(x) \leq y(x) \leq v(x) \text{ for a.\,a.\,}x\in\Omega\right\},\\[1ex]
    \sideset{}{_{C^1_0(\close)}} \ints  [u,v]&=\text{the interior in } C^1_0(\close) \text{ of } [u,v]\cap C^1_0(\close),\\
    [u) & = \left\{y\in \Wpzero{p}: u(x) \leq y(x) \text{ for a.\,a.\,}x\in\Omega\right\}.
\end{align*}

If $h,g \in \Linf$, then we write $h \prec g$ if and only if for every compact set $K\subseteq \Omega$, there exists $c_K>0$ such that $c_K \leq g(x)-h(x)$ for a.\,a.\,$x\in K$. Note that if $h,g \in C(\Omega)$ and $h(x)<g(x)$ for all $x\in \Omega$, then $h\prec g$.

If $X$ is a Banach space and $\ph \in C^1(X)$, then we denote by $K_\ph$ the critical set of $\ph$, that is,
\begin{align*}
    K_\ph=\left\{ u\in X: \ph'(u)=0\right\}.
\end{align*}
Moreover, we say that $\ph$ satisfies the ``Cerami condition'', C-condition for short, if every sequence $\{u_n\}_{n \geq 1}\subseteq X$ such that $\{\ph(u_n)\}_{n\geq 1}\subseteq \R$ is bounded and 
\begin{align*}
    \left(1+\|u_n\|_X\right)\ph'(u_n) \to 0\quad\text{in }X^* \text{ as }n\to \infty,
\end{align*}    
admits a strongly convergent subsequence.

For every $r\in (1,\infty)$, let $A_r\colon \Wpzero{r}\to W^{-1,r'}(\Omega)=\Wpzero{r}^*$ with $\frac{1}{r}+\frac{1}{r'}=1$ be defined by
\begin{align*}
    \left\lan A_r(u), h\right\ran = \into |\nabla u|^{r-2} \nabla u \cdot \nabla h \,dx \quad\text{for all }u,h\in \Wpzero{r}.
\end{align*}
This operator has the following properties, see Gasi{\'n}ski-Papageorgiou \cite[p.\,279]{Gasinski-Papageorgiou-2016}. 

\begin{proposition}\label{proposition_1}
    The map $A_r\colon \Wpzero{r}\to W^{-1,r'}(\Omega)$ is bounded (that is, it maps bounded sets into bounded sets), continuous, strictly monotone (so maximal monotone) and of type ($\Ss$)$_+$, that is,
    \begin{align*}
	u_n\weak u \text{ in }\Wpzero{r} \quad \text{and}\quad \limsup_{n\to\infty} \lan A_r(u_n),u_n-u\ran \leq 0
    \end{align*}
    imply
    \begin{align*}
	u_n\to u \quad\text{in }\Wpzero{r}.
    \end{align*}
\end{proposition}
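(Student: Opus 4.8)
The plan is to verify the four asserted properties in turn, the type ($\Ss$)$_+$ property being the only one that requires genuine work. Throughout I would use the defining expression $\lan A_r(u),h\ran = \into |\nabla u|^{r-2}\nabla u\cdot\nabla h\,dx$ together with H\"older's inequality and two elementary inequalities for the field $\xi\mapsto|\xi|^{r-2}\xi$ on $\RN$.

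For boundedness, H\"older's inequality gives
\[
    |\lan A_r(u),h\ran| \leq \into |\nabla u|^{r-1}|\nabla h|\,dx \leq \|\nabla u\|_r^{r-1}\|\nabla h\|_r,
\]
so that $\|A_r(u)\|_* \leq \|u\|^{r-1}$ and bounded sets map to bounded sets. For continuity, if $u_n\to u$ in $\Wpzero{r}$ then $\nabla u_n\to\nabla u$ in $L^r(\Om;\RN)$; the continuity of the Nemytskii map associated to the continuous field $\xi\mapsto|\xi|^{r-2}\xi$ (which has growth of order $r-1$) yields $|\nabla u_n|^{r-2}\nabla u_n \to |\nabla u|^{r-2}\nabla u$ in $L^{r'}(\Om;\RN)$, and a further application of H\"older shows $A_r(u_n)\to A_r(u)$ in $W^{-1,r'}(\Om)$. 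Strict monotonicity follows from the inequality $(|\xi|^{r-2}\xi-|\zeta|^{r-2}\zeta)\cdot(\xi-\zeta)>0$ for $\xi\neq\zeta$: integrating gives $\lan A_r(u)-A_r(v),u-v\ran>0$ whenever $u\neq v$. Since $A_r$ is everywhere defined, monotone and continuous, it is maximal monotone by the standard criterion.

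The heart of the proof is the type ($\Ss$)$_+$ property. Assume $u_n\weak u$ in $\Wpzero{r}$ and $\lims \lan A_r(u_n),u_n-u\ran\leq 0$. By monotonicity, $\lan A_r(u_n),u_n-u\ran \geq \lan A_r(u),u_n-u\ran$, and the right-hand side tends to $0$ because $A_r(u)\in W^{-1,r'}(\Om)$ and $u_n-u\weak 0$. Hence $\lan A_r(u_n),u_n-u\ran\to 0$, and subtracting the vanishing term $\lan A_r(u),u_n-u\ran$ gives $\lan A_r(u_n)-A_r(u),u_n-u\ran\to 0$, that is,
\[
    \into\left(|\nabla u_n|^{r-2}\nabla u_n-|\nabla u|^{r-2}\nabla u\right)\cdot\left(\nabla u_n-\nabla u\right)dx\to 0.
\]
I would now split according to the value of $r$. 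For $r\geq 2$ the inequality $(|\xi|^{r-2}\xi-|\zeta|^{r-2}\zeta)\cdot(\xi-\zeta)\geq c_r|\xi-\zeta|^r$ bounds the left-hand side below by $c_r\|\nabla u_n-\nabla u\|_r^r$, so $\|u_n-u\|\to 0$ at once. The delicate case is $1<r<2$, where one only has the weaker bound $(|\xi|^{r-2}\xi-|\zeta|^{r-2}\zeta)\cdot(\xi-\zeta)\geq c_r |\xi-\zeta|^2/(|\xi|+|\zeta|)^{2-r}$; here I would write $\|\nabla u_n-\nabla u\|_r^r$ as an integral of $|\nabla u_n-\nabla u|^r$, insert the factor $(|\nabla u_n|+|\nabla u|)^{(2-r)r/2}$ together with its reciprocal, and apply H\"older with exponents $2/r$ and $2/(2-r)$. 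This yields
\[
    \|\nabla u_n-\nabla u\|_r^r \leq C\left(\into\frac{|\nabla u_n-\nabla u|^2}{(|\nabla u_n|+|\nabla u|)^{2-r}}\,dx\right)^{r/2}\left(\|\nabla u_n\|_r+\|\nabla u\|_r\right)^{(2-r)r/2}.
\]
The first factor is controlled by the quantity just shown to vanish, while the second stays bounded because weakly convergent sequences are bounded; therefore $\|u_n-u\|\to 0$.

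The main obstacle is precisely this last estimate in the subquadratic range $1<r<2$: the monotonicity integrand no longer dominates a power of $|\nabla u_n-\nabla u|$ directly, and one must compensate by the H\"older splitting above, exploiting boundedness of $\{u_n\}_{n\geq 1}$ to absorb the extra factor. Everything else reduces to H\"older's inequality, continuity of Nemytskii operators, and the elementary monotonicity inequality for $\xi\mapsto|\xi|^{r-2}\xi$.
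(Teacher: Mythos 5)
Your proposal is correct. Note that the paper does not prove this proposition at all: it is stated as a known result with a citation to Gasi\'nski--Papageorgiou, \emph{Exercises in Analysis, Part 2}, p.~279, so there is no in-paper argument to compare against. What you have written is the standard self-contained proof one finds in that reference and elsewhere: H\"older for boundedness, Nemytskii continuity for continuity, the elementary vector inequalities for $\xi\mapsto|\xi|^{r-2}\xi$ for strict monotonicity, and the monotonicity trick $\lan A_r(u_n)-A_r(u),u_n-u\ran\to 0$ followed by the case split $r\geq 2$ versus $1<r<2$ with the H\"older interpolation for the subquadratic range. The only point worth making explicit is in the strict monotonicity step: from $\lan A_r(u)-A_r(v),u-v\ran=0$ you get $\nabla u=\nabla v$ a.e., and you need the Poincar\'e inequality (i.e.\ that $\|\nabla\cdot\|_r$ is a norm on $\Wpzero{r}$) to conclude $u=v$; similarly, in the $1<r<2$ case one should read the integrand $|\nabla u_n-\nabla u|^2/(|\nabla u_n|+|\nabla u|)^{2-r}$ as $0$ on the set where both gradients vanish. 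Both are routine and do not affect the validity of the argument.
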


The hypotheses on the function $f(\cdot)$  are the following ones:

\begin{enumerate}
    \item[H($f$):]
	$f\colon \Omega\times\R\to\R$ is a Carath\'{e}odory function such that 
	\begin{enumerate}
	    \item[(i)]
		\begin{align*}
		  0\leq f(x,s) \leq c_1 \left[1+s^{r-1}\right] 
		\end{align*}
		for a.\,a.\,$x\in\Omega$, for all $s \geq 0$ with $c_1>0$ and $r\in (p,p^*)$;
	    \item[(ii)]
		if $F(x,s)=\int_0^sf(x,t)\,dt$, then
		\begin{align*}
		    \lim_{s\to+\infty} \frac{F(x,s)}{s^p}=+\infty\quad\text{uniformly for a.\,a.\,}x\in\Omega;
		\end{align*}
	    \item[(iii)]
		there exists $\mu \in \left((r-p)\max \left\{1,\frac{N}{p}\right\},p^*\right)$ with $\mu>\tau$ such that
		\begin{align*}
		    0<c_2 \leq \liminf_{s\to +\infty} \frac{f(x,s)s-pF(x,s)}{s^\mu} \quad\text{uniformly for a.\,a.\,}x\in\Omega;
		\end{align*}
	    \item[(iv)]
		\begin{align*}
		    \lim_{s\to 0^+} \frac{f(x,s)}{s^{q-1}}=0\quad\text{uniformly for a.\,a.\,}x\in\Omega;
		\end{align*}
	    \item[(v)]
		for every $\rho>0$ there exists $\hat{\xi}_\rho>0$ such that the function
		\begin{align*}
		    s \mapsto f(x,s)+\hat{\xi}_\rho s^{p-1}
		\end{align*}
		is nondecreasing on $[0,\rho]$ for a.\,a.\,$x\in\Omega$.
	\end{enumerate}
\end{enumerate}

\begin{remark}
    Since our aim is to produce positive solutions and all the hypotheses above concern the positive semiaxis $\R_+=[0,+\infty)$, we may assume, without any loss of generality, that
    \begin{align}\label{1}
	f(x,s)=0 \quad\text{for a.\,a.\,}x\in\Omega \text{ and for all }s \leq 0.
    \end{align}
\end{remark}

Note that  hypothesis H($f$)(iv) implies that $f(x,0)=0$ for a.\,a.\,$x\in\Omega$. From hypotheses H($f$)(ii), (iii) we infer that
\begin{align*}
    \lim_{s\to +\infty} \frac{f(x,s)}{s^{p-1}}=+\infty \quad\text{uniformly for a.\,a.\,}x\in\Omega.
\end{align*}
Therefore, the perturbation $f(x,\cdot)$ is $(p-1)$-superlinear for a.\,a.\,$x\in\Omega$. However, the superlinearity of $f(x,\cdot)$ is not expressed using the AR-condition which is common in the literature for superlinear problems. We recall that the AR-condition says that there exist $\beta>p$ and $M>0$ such that
\begin{align}
    0&<\beta F(x,s) \leq f(x,s)s \quad\text{for a.\,a.\,}x\in\Omega \text{ and for all }s\geq M,\label{2a}\\
    0&<\essinf_{x\in \Omega} F(x, M)\label{2b}.
\end{align}
In fact this is a uniliteral version of the AR-condition due to \eqref{1}. Integrating \eqref{2a} and using \eqref{2b} gives the weaker condition
\begin{align*}
    c_3 s^{\beta} \leq F(x,s) \quad\text{for a.\,a.\,}x\in\Omega, \text{ for all }x\geq M\text{ and for some }c_3>0,
\end{align*}
which implies
\begin{align*}
    c_3 s^{\beta-1} \leq f(x,s) \quad\text{for a.\,a.\,}x\in\Omega \text{ and for all }s \geq M.
\end{align*}
Hence, the AR-condition dictates that $f(x,\cdot)$ eventually has at least $(\beta-1)$-po\-ly\-no\-mi\-al growth. In the present work we replace the AR-condition by hypothesis H($f$)(iii) which includes in our framework also superlinear nonlinearities with slower growth near $+\infty$. 

Hypothesis H($f$)(v) is a one-sided H\"older condition. If $f(x,\cdot)$ is differentiable for a.\,a.\,$x\in\Omega$ and if for every $\rho>0$ there exists $c_\rho>0$ such that
\begin{align*}
    f'_s(x,s)s \geq -c_\rho s^{p-1}\quad\text{for a.\,a.\,}x\in\Omega \text{ and for all }0\leq s \leq \rho,
\end{align*}
then hypothesis H($f$)(v) is satisfied.

We introduce the following sets 
\begin{align*}
    \mathcal{L}&=\left\{\lambda>0: \text{problem \eqref{problem} admits a positive solution}\right\},\\
    \mathcal{S}_{\lambda}&=\left\{u: u \text{ is a positive solution of } \eqref{problem}\right\}.
\end{align*}

Moreover, we consider the following auxiliary Dirichlet problem
\begin{align}\tag{Q$_\lambda$}\label{problem2}
  \begin{split}
    &-\Delta_p u-\Delta_q u = \lambda a(x) u^{\tau-1}\quad \text{in } \Omega \\
    &u\big|_{\partial \Omega}=0, \quad u>0, \quad \lambda>0,\quad 1<\tau<q<p.
   \end{split}
\end{align}

\begin{proposition}\label{proposition_2}
    If hypothesis H($a$) holds, then for every $\lambda>0$ problem \eqref{problem2} admits a unique solution $\tilde{u}_\lambda \in \interior$.
\end{proposition}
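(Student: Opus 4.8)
The plan is to obtain existence by the direct method of the calculus of variations and then uniqueness by a Díaz–Saá type argument adapted to the $(p,q)$-structure. First I would introduce the $C^1$-functional $\psi\colon\Wpzero{p}\to\R$,
\[
\psi(u)=\frac1p\|\nabla u\|_p^p+\frac1q\|\nabla u\|_q^q-\frac{\lambda}{\tau}\into a(x)\l(u^+\r)^\tau\,dx ,
\]
and check that it is coercive and sequentially weakly lower semicontinuous. Coercivity follows from $\tau<p$: by H($a$) and the Sobolev embedding the last term is bounded by $c\|u\|^\tau$, which is dominated by $\tfrac1p\|u\|^p$. Weak lower semicontinuity is clear because the gradient terms are convex and continuous, while the integral term is weakly continuous thanks to the compact embedding $\Wpzero{p}\hookrightarrow L^\tau(\Omega)$. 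Hence $\psi$ attains its infimum at some $\tilde u_\lambda$. To see $\tilde u_\lambda\neq 0$ I would evaluate $\psi$ along $t\hat u_1$ as $t\to0^+$: since $\tau<q<p$, the term $-\tfrac{\lambda t^\tau}{\tau}\into a\hat u_1^\tau\,dx$ dominates, so $\psi(t\hat u_1)<0$ for small $t$ and therefore $\psi(\tilde u_\lambda)<0=\psi(0)$. Testing the Euler equation $\psi'(\tilde u_\lambda)=0$ with $-\tilde u_\lambda^-$ yields $\|\nabla\tilde u_\lambda^-\|_p^p+\|\nabla\tilde u_\lambda^-\|_q^q=0$, so that $\tilde u_\lambda\geq 0$, $\tilde u_\lambda\neq 0$, and it is a weak solution of \eqref{problem2}.

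Next I would upgrade regularity and positivity. The nonlinear regularity theory (boundedness via Moser iteration, followed by the $C^{1,\alpha}$ estimates of Lieberman) yields $\tilde u_\lambda\in C^1_0(\close)_+$. Since $\Delta_p\tilde u_\lambda+\Delta_q\tilde u_\lambda=-\lambda a\tilde u_\lambda^{\tau-1}\leq 0$ with $a\geq a_0>0$, the nonlinear strong maximum principle together with a Hopf-type boundary lemma gives $\tilde u_\lambda\in\interior$.

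The main point is uniqueness. Suppose $u_1,u_2\in\interior$ both solve \eqref{problem2}. Because both lie in the interior of the cone, the quotients $h_1=\dfrac{u_1^q-u_2^q}{u_1^{q-1}}$ and $h_2=\dfrac{u_2^q-u_1^q}{u_2^{q-1}}$ are admissible test functions in $\Wpzero{p}$. Writing $w=u^q$, the hidden-convexity (Díaz–Saá) principle asserts that $w\mapsto\into|\nabla w^{1/q}|^r\,dx$ is convex on the cone of nonnegative functions precisely when $r\geq q$, and the monotonicity of the gradient of this convex functional gives, for $r\in\{p,q\}$,
\[
\lan A_r(u_1),h_1\ran+\lan A_r(u_2),h_2\ran\geq 0 .
\]
Summing over $r=p$ and $r=q$ shows that the left-hand sides of the two weak formulations, added together, are nonnegative. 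On the other hand the corresponding sum of right-hand sides equals $\lambda\into a\,(u_1^q-u_2^q)(u_1^{\tau-q}-u_2^{\tau-q})\,dx$, which is nonpositive because $\tau<q$ makes $s\mapsto s^{\tau-q}$ strictly decreasing. Since the two sums are equal, both vanish, forcing the integrand to be zero and hence $u_1=u_2$.

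The subtle step—and the one I expect to be the main obstacle—is exactly the choice of the exponent $q$ (rather than $p$) in the Díaz–Saá test functions. Only this choice makes \emph{both} the $p$- and the $q$-Laplacian terms contribute with the correct sign, since matching to $q$ keeps $r\geq q$ for both operators, whereas $w\mapsto\into|\nabla w^{1/p}|^q\,dx$ fails to be convex when $q<p$. Handling this mismatch between the two operators is the genuine difficulty in the $(p,q)$-setting; the existence and regularity parts are otherwise routine.
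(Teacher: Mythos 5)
Your proposal is correct and follows essentially the same route as the paper: direct minimization of the associated energy functional for existence, nonlinear regularity plus the strong maximum principle for membership in $\interior$, and a D\'{\i}az--Sa\'{a} hidden-convexity argument with the exponent $q$ for uniqueness. Your uniqueness step, phrased via the test functions $(u_1^q-u_2^q)/u_i^{q-1}$, is exactly the monotonicity of the Gateaux derivative of the convex functional $j(w)=\frac1p\|\nabla w^{1/q}\|_p^p+\frac1q\|\nabla w^{1/q}\|_q^q$ used in the paper, including the observation that matching the exponent to the smaller index $q$ is what makes both operators contribute with the right sign.
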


\begin{proof}
    We consider the $C^1$-functional $\gamma_\lambda\colon\Wpzero{p}\to\R$ defined by
    \begin{align*}
	\gamma_\lambda(u)=\frac{1}{p}\|\nabla u\|_p^p +\frac{1}{q} \|\nabla u\|_q^q-\lambda \into a(x) \left(u^+\right)^{\tau}\,dx \quad\text{for all }u \in \Wpzero{p}.
    \end{align*}
    Since $\tau<q<p$ it is clear that $\gamma_\lambda\colon\Wpzero{p}\to\R$ is coercive and by the Sobolev embedding theorem, we see that $\gamma_\lambda\colon\Wpzero{p}\to\R$ is sequentially weakly lower semicontinuous. Hence, there exists $\tilde{u}_\lambda\in\Wpzero{p}$ such that
    \begin{align}\label{3}
	\gamma_\lambda\left(\tilde{u}_\lambda\right)=
	\min \left[\gamma_\lambda(u): u\in \Wpzero{p}\right].
    \end{align}
    If $u \in \interior$ and $t>0$ then
    \begin{align*}
	\gamma_\lambda (tu) = \frac{t^p}{p} \|\nabla u\|_p^p +\frac{t^q}{q} \|\nabla u\|^q_q-\frac{\lambda t^\tau}{\tau} \into a(x) u^2\,dx.
    \end{align*}
    Since $\tau <q<p$, choosing $t\in (0,1)$ small enough, we have $\gamma_\lambda (tu)<0$ and so,
    \begin{align*}
	\gamma_\lambda\left(\tilde{u}_\lambda\right )<0=\gamma_\lambda(0),
    \end{align*}
    see \eqref{3}, which shows that $\tilde{u}_\lambda\neq 0$. From \eqref{3} we know that $\gamma_\lambda'\left(\tilde{u}_\lambda\right)=0$, that is,
    \begin{align}\label{4}
	\lan A_p \left(\tilde{u}_\lambda\right),h\ran +\lan A_q\left(\tilde{u}_\lambda\right),h\ran=\lambda \into a(x) \left(\tilde{u}_\lambda^+\right)^{\tau -1}h\,dx\quad\text{for all }h\in\Wpzero{p}.
    \end{align}
    Choosing $h=-\tilde{u}^-_\lambda \in \Wpzero{p}$ in \eqref{4} gives
    \begin{align*}
	\left\|\nabla \tilde{u}^-_\lambda\right\|_p^p +\left\|\nabla \tilde{u}_\lambda^-\right\|_q^q =0,
    \end{align*}
    which shows that $\tilde{u}_\lambda \geq 0$ with $\tilde{u}_\lambda\neq 0$. Therefore, \eqref{4} becomes
    \begin{align*}
	&-\Delta_p \tilde{u}_\lambda -\Delta_q \tilde{u}_\lambda = \lambda a(x) \tilde{u}_\lambda^{\tau-1}\quad \text{in } \Omega, \qquad \tilde{u}_\lambda\big|_{\partial \Omega}=0.
    \end{align*}
    We know that $\tilde{u}_\lambda\in\Linf$, see, for example Marino-Winkert \cite{Marino-Winkert-2019}. Then, from the nonlinear regularity theory of Lieberman \cite{Lieberman-1991} we have that $\tilde{u}_\lambda \in C^1_0(\overline{\Omega})_+\setminus\{0\}$. Moreover, the nonlinear maximum principle of Pucci-Serrin \cite[pp.\,111, 120]{Pucci-Serrin-2007} implies that $\tilde{u}_\lambda \in \interior$.
    
    We still have to show that this positive solution is unique. Suppose that $\tilde{v}_\lambda \in \Wpzero{p}$ is another solution of \eqref{problem2}. As before we can show that $\tilde{v}_\lambda \in \interior$. We consider the integral functional $j\colon \Lp{1}\to \overline{\R}=\R\cup\{+\infty\}$ defined by
    \begin{align*}
	j(u)=
	\begin{cases}
	    \frac{1}{p}\left\|\nabla u^{\frac{1}{q}}\right\|_p^p +\frac{1}{q} \left\|\nabla u^{\frac{1}{q}}\right\|^q_q &\text{if } u\geq 0, \, u^{\frac{1}{q}} \in \Wpzero{p},\\
	    +\infty &\text{otherwise}.
	\end{cases}
    \end{align*}
    From D\'{\i}az-Sa\'{a} \cite[Lemma 1]{Diaz-Saa-1987} we see that $j$ is convex. Furthermore, applying Proposition 4.1.22 of Papageorgiou-R\u{a}dulescu-Repov\v{s} \cite[p.\,274]{Papageorgiou-Radulescu-Repovs-2019c}, we obtain that
    \begin{align*}
	\frac{\tilde{u}_\lambda}{\tilde{v}_\lambda}, \, \frac{\tilde{v}_\lambda}{\tilde{u}_\lambda}\in\Linf.
    \end{align*}
    We denote by
    \begin{align*}
	\dom j =\left\{u\in\Lp{1}: j(u)<+\infty\right\}
    \end{align*}
    the effective domain of $j$ and set $h=\tilde{u}_\lambda^q-\tilde{v}_\lambda^q$. One gets
    \begin{align*}
	\tilde{u}_\lambda^q-th \in \dom j \quad\text{and}\quad
	\tilde{v}_\lambda^q+th\in\dom j\quad\text{for all }t\in [0,1].
    \end{align*}
    Note that the functional  $j:\Lp{1}\to\overline{\R}$ is Gateaux differentiable at $\tilde{u}_\lambda^q$ and at $\tilde{v}_\lambda^q$ in the direction $h$. Using the nonlinear Green's identity, see Papageorgiou-R\u{a}dulescu-Repov\v{s} \cite[Corollary 1.5.16, p.\,34]{Papageorgiou-Radulescu-Repovs-2019c}, we obtain
    \begin{align*}
	j'\left(\tilde{u}_\lambda^q\right)(h)
	&=\frac{1}{q} \into \frac{-\Delta_p \tilde{u}_\lambda-\Delta_q \tilde{u}_\lambda}{\tilde{u}_\lambda^{q-1}}h\,dx =\frac{\lambda}{q}\into \frac{a(x)}{\tilde{u}_\lambda^{q-\tau}}h\,dx,\\
	j'\left(\tilde{v}_\lambda^q\right)(h)
	&=\frac{1}{q} \into \frac{-\Delta_p \tilde{v}_\lambda-\Delta_q \tilde{v}_\lambda}{\tilde{v}_\lambda^{q-1}}h\,dx =\frac{\lambda}{q}\into \frac{a(x)}{\tilde{v}_\lambda^{q-\tau}}h\,dx.
    \end{align*}
    The convexity of $j\colon\Lp{1}\to\overline{\R}$ implies the monotonicity of $j'$. Hence
    \begin{align*}
	0 \leq \frac{\lambda}{q}\into a(x) \left[\frac{1}{\tilde{u}_\lambda^{q-\tau}}-\frac{1}{\tilde{v}_\lambda^{q-\tau}} \right]\left[\tilde{u}_\lambda^q-\tilde{v}_\lambda^q\right]\,dx\leq 0,
    \end{align*}
    which implies $\tilde{u}_\lambda=\tilde{v}_\lambda$. Therefore, $\tilde{u}_\lambda\in\interior$ is the unique positive solution of the auxiliary problem \eqref{problem2}.
\end{proof}

This solution will provide a useful lower bound for the elements of the set of positive solutions $\mathcal{S}_\lambda$.

\section{Positive Solutions}

Let $\tilde{u}_\lambda\in\interior$ be the unique positive solution of \eqref{problem2}, see Proposition \ref{proposition_2}. Let $s>N$. Then $\tilde{u}_\lambda^s\in\ints K_+$ and so there exists $c_4>0$ such that
\begin{align*}
    \hat{u}_1 \leq c_4 \tilde{u}_\lambda^s,
\end{align*}
see Section \ref{section_2}. Hence
\begin{align*}
    \tilde{u}_\lambda^{-\eta} \leq c_5 \hat{u}_1^{-\frac{\eta}{s}}\quad\text{for some }c_5>0.
\end{align*}
Applying the Lemma of Lazer-McKenna \cite{Lazer-McKenna-1991} we have
\begin{align*}
    \hat{u}_1^{-\frac{\eta}{s}} \in \Lp{s}
\end{align*}
and thus
\begin{align}\label{5}
    \tilde{u}_\lambda^{-\eta} \in \Lp{s}.
\end{align}

We introduce the following modification of problem \eqref{problem} in which we have neutralized the singular term
\begin{align}\tag{P$_\lambda$'}\label{problem3}
  \begin{split}
    &-\Delta_p u-\Delta_q u = \lambda\tilde{u}_\lambda^{-\eta}+\lambda a(x)u^{\tau-1}+f(x,u)\quad \text{in } \Omega \\
    &u\big|_{\partial \Omega}=0, \quad u>0, \quad \lambda>0,\quad 1<\tau<q<p, \quad 0<\eta<1.
   \end{split}
\end{align}

Let $\psi_\lambda\colon\Wpzero{p}\to\R$ be the Euler energy functional of problem \eqref{problem3} defined by

\begin{align*}
    \psi_\lambda(u)
    &=\frac{1}{p}\|\nabla u\|_p^p +\frac{1}{q} \|\nabla u\|_q^q -\lambda \into \tilde{u}_\lambda^{-\eta} u\,dx\\ 
    &\quad -\frac{\lambda}{\tau} \into a(x)\left(u^+\right)^{\tau}\,dx-\into F(x,u^+)\,dx
\end{align*}
for all $u \in \Wpzero{p}$, see \eqref{5}. It is clear that $\psi_\lambda \in C^1(\Wpzero{p})$.

\begin{proposition}\label{proposition_3}
    If hypotheses H($a$) and H($f$) hold and if $\lambda>0$, then $\psi_\lambda$ satisfies the C-condition.
\end{proposition}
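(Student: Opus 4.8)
The plan is to verify the Cerami condition directly. Fix a sequence $\{u_n\}_{n\geq 1}\subseteq\Wpzero{p}$ with $|\psi_\lambda(u_n)|\leq M_1$ for some $M_1>0$ and $(1+\|u_n\|)\psi_\lambda'(u_n)\to 0$ in $\dual$; write $\eps_n=(1+\|u_n\|)\|\psi_\lambda'(u_n)\|_*\to 0$. The entire difficulty lies in proving that $\{u_n\}$ is bounded in $\Wpzero{p}$, after which strong convergence follows from the $(\Ss)_+$-property in Proposition~\ref{proposition_1}. First I would test $\psi_\lambda'(u_n)$ against $-u_n^-$. Since $f(x,\cdot)$ vanishes on $(-\infty,0]$ by \eqref{1} and the concave term only sees $u_n^+$, all reaction contributions drop out except the nonnegative singular one, leaving $\|\nabla u_n^-\|_p^p+\|\nabla u_n^-\|_q^q\leq\eps_n$. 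Hence $u_n^-\to 0$ in $\Wpzero{p}$, and it remains to control $u_n^+$.

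The core estimate replaces the missing AR-condition by hypothesis H($f$)(iii). Forming the combination $p\psi_\lambda(u_n)-\lan\psi_\lambda'(u_n),u_n\ran$, the $\|\nabla u_n\|_p^p$ terms cancel, the $q$-term survives with the positive coefficient $\tfrac{p}{q}-1$ (using $p>q$), and the reaction collapses to $\into\bigl[f(x,u_n^+)u_n^+-pF(x,u_n^+)\bigr]\,dx$. Since the left-hand side is bounded by $pM_1+\eps_n$ and the $q$-term is nonnegative, this reaction integral is bounded above; combining with the lower bound $c_6\|u_n^+\|_\mu^\mu-c_7$ supplied by H($f$)(iii) (there are $c_6>0$, $M_3>0$ with $f(x,s)s-pF(x,s)\geq c_6 s^\mu$ for $s\geq M_3$), I obtain $c_6\|u_n^+\|_\mu^\mu\leq c_7'+ (p-1)\lambda\into\tilde{u}_\lambda^{-\eta}u_n\,dx+\bigl(\tfrac{p}{\tau}-1\bigr)\lambda\into a(x)(u_n^+)^\tau\,dx$. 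By H\"older's inequality and \eqref{5} the singular term grows only linearly in $\|u_n^+\|_\mu$, while the concave term is $O(\|u_n^+\|_\mu^\tau)$; as $\tau<\mu$ both are absorbed by the leading power, giving a bound on $\|u_n^+\|_\mu$.

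Next I would upgrade this $L^\mu$-bound to a $\Wpzero{p}$-bound. Testing with $u_n$ gives $\|\nabla u_n\|_p^p\leq\eps_n+(\text{lower order})+\into f(x,u_n^+)u_n^+\,dx$, and by H($f$)(i) the last integral is $\leq c_1\|u_n^+\|_1+c_1\|u_n^+\|_r^r$. Interpolating $\Lp{r}$ between $\Lp{\mu}$ and $L^{p^*}(\Omega)$ and invoking the Sobolev embedding $\|u_n^+\|_{p^*}\leq C\|u_n\|$, the power of $\|u_n\|$ produced equals $tr$ with $t=\bigl(\tfrac{1}{\mu}-\tfrac{1}{r}\bigr)\big/\bigl(\tfrac{1}{\mu}-\tfrac{1}{p^*}\bigr)$; the restriction $\mu>(r-p)\max\left\{1,\tfrac{N}{p}\right\}$ in H($f$)(iii) is \emph{exactly} the condition that forces $tr<p$, so $\|\nabla u_n\|_p^p$ dominates and $\{u_n\}$ is bounded. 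Matching the admissible range of $\mu$ to the inequality $tr<p$ is the main obstacle of the argument.

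Finally, passing to a subsequence with $u_n\weak u$ in $\Wpzero{p}$ and $u_n\to u$ in $\Lp{r}$ (compact embedding, $r<p^*$), I would test $\psi_\lambda'(u_n)$ against $u_n-u$. The singular term vanishes in the limit by H\"older's inequality together with $\tilde{u}_\lambda^{-\eta}\in\Lp{s}$ from \eqref{5}, and both the concave term and $\into f(x,u_n^+)(u_n-u)\,dx$ vanish by the growth bound H($f$)(i) and strong $\Lp{r}$-convergence. Monotonicity of $A_q$ yields $\limi\lan A_q(u_n),u_n-u\ran\geq 0$, hence $\lims\lan A_p(u_n),u_n-u\ran\leq 0$, and the $(\Ss)_+$-property of $A_p$ in Proposition~\ref{proposition_1} gives $u_n\to u$ in $\Wpzero{p}$. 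This is the required strongly convergent subsequence, so $\psi_\lambda$ satisfies the C-condition.
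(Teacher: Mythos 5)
Your proposal is correct and follows essentially the same route as the paper: testing with $-u_n^-$, forming the combination $p\psi_\lambda(u_n)-\lan\psi_\lambda'(u_n),u_n\ran$ so that H($f$)(iii) yields an $\Lp{\mu}$-bound on $u_n^+$, upgrading it via interpolation between $\Lp{\mu}$ and $L^{p^*}(\Omega)$ (with $tr<p$ coming from the lower bound on $\mu$), and concluding with the monotonicity of $A_q$ and the $(\Ss)_+$-property of $A_p$. The only cosmetic differences are that the paper splits the interpolation step into the cases $N\neq p$ and $N=p$ and explicitly fixes $s>N$ with $s'<\mu$ before applying H\"older to the singular term, both of which you use implicitly.
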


\begin{proof}
    Let $\{u_n\}_{n \geq 1} \subseteq \Wpzero{p}$ be a sequence such that
    \begin{align}
	&\left|\psi_\lambda (u_n)\right|\leq c_6 \quad\text{for all }n\in\N \text{ and for some }c_6>0,\label{6}\\
	&(1+\|u_n\|)\psi'_\lambda(u_n)\to 0 \quad\text{in }\Wpzero{p}^*=W^{-1,p'}(\Omega) \text{ with }\frac{1}{p}+\frac{1}{p'}=1.\label{7}
    \end{align}
    From \eqref{7} we have
    \begin{align}\label{8}
      \begin{split}
	&\left| \lan A_p(u_n),h\ran+\lan A_q(u_n),h\ran 
	-\lambda\into \tilde{u}_\lambda^{-\eta}h\,dx-\lambda\into a(x) \left(u_n^+\right)^{\tau-1}h\,dx\right.\\
	& \quad \left.-\into f\left(x,u_n^+\right)h\,dx\right| \leq \frac{\eps_n\|h\|}{1+\|u_n\|}\quad\text{for all } h\in\Wpzero{p} \text{ with }\eps_n\to 0^+.
      \end{split}
    \end{align}
    Choosing $h=-u_n^-\in\Wpzero{p}$ in \eqref{8} leads to
    \begin{align*}
	\left\|\nabla u_n^-\right\|_p^p \leq \eps_n\quad\text{for all }n \in \N,
    \end{align*}
    which implies
    \begin{align}\label{9}
	u_n^- \to 0\quad\text{in }\Wpzero{p} \text{ as }n\to\infty.
    \end{align}
    Combining \eqref{6} and \eqref{9} gives
    \begin{align}\label{10}
      \begin{split}
	& \left\|\nabla u_n^+\right\|_p^p+\frac{p}{q}\left\|\nabla u_n^+\right\|_q^q -\lambda p\into \tilde{u}_\lambda^{-\eta} u_n^+\,dx-\frac{\lambda p}{\tau} \into a(x) \left(u_n^+\right)^{\tau}\,dx\\ &-\into pF\left(x,u_n^+\right)\,dx
	\leq c_7 \quad \text{for all }n\in\N \text{ and for some }c_7>0.
      \end{split}
    \end{align}
    On the other hand, if we choose $h=u_n^+\in\Wpzero{p}$ in \eqref{8}, we obtain
    \begin{align}\label{11}
      \begin{split}
	& -\left\|\nabla u_n^+\right\|_p^p-\left\|\nabla u_n^+\right\|_q^q+\lambda \into \tilde{u}_\lambda^{-\eta} u_n^+\,dx+\lambda \into a(x) \left(u_n^+\right)^\tau\,dx\\
	& +\into f\left(x,u_n^+\right)u_n^+\,dx \leq \eps_n
	\quad\text{for all }n\in\N.
      \end{split} 
    \end{align}
    Adding \eqref{10} and \eqref{11} yields
    \begin{align}\label{12}
      \begin{split}
	&\into \left[ f\left(x,u_n^+\right)u_n^+-pF\left(x,u_n^+\right)\right]\,dx\\ 
	&\leq \lambda (p-1)\into \tilde{u}_\lambda^{-\eta}u_n^+\,dx+\lambda \left[\frac{p}{\tau}-1\right]\into a(x) \left(u_n^+\right)^\tau \,dx.
      \end{split}
    \end{align}
    By hypotheses H($f$)(i), (iii) we can find $c_8>0$ such that
    \begin{align*}
	\frac{c_2}{2}s^\mu-c_8 \leq f(x,s)s-pF(x,s)\quad\text{for a.\,a.\,}x\in\Omega \text{ and for all }s\geq 0.
    \end{align*}
    This implies
    \begin{align}\label{13}
	\frac{c_2}{2}s^\mu\left\|u_n^+\right\|_\mu^\mu -c_9 \leq \into \left[ f\left(x,u_n^+\right)u_n^+-pF\left(x,u_n^+\right)\right]\,dx
    \end{align}
    for some $c_9>0$ and for all $n\in\N$.
    
    Since $s>N$ we have $s'<N'\leq p^*$. Hence, $u_n^+\in\Lp{s'}$. Then, taking \eqref{5} along with H\"older's inequality into account, we get
    \begin{align}\label{14}
	\lambda [p-1]\into \tilde{u}_\lambda^{-\eta}u_n^+\,dx \leq c_{10} \left\|\tilde{u}_\lambda^{-\eta}\right\|_s \left\|u_n^+\right\|_{s'}
    \end{align}
    for some $c_{10}=c_{10}(\lambda)>0$ and for all $n\in\N$. Moreover, by hypothesis H($a$), we have
    \begin{align}\label{15}
	\lambda \left[\frac{p}{\tau}-1\right]\into a(x) \left(u_n^+\right)^\tau\,dx
	\leq c_{11} \left\|u_n^+\right\|_\tau^\tau 
    \end{align}
    for some $c_{11}=c_{11}(\lambda)>0$  and for all $n\in\N$.
    
    Now we choose $s>N$ large enough such that $s'<\mu$. Returning to \eqref{12}, using \eqref{13}, \eqref{14} as well as \eqref{15} and using the fact that $s', \tau <\mu$ by hypothesis H($f$)(iii) leads to
    \begin{align*}
	\left\|u_n^+\right\|_\mu^\mu \leq c_{12}\left[\left\|u_n^+\right\|_\mu +\left\|u_n^+\right\|_\mu^\tau+1\right]
    \end{align*}
    for some $c_{12}>0$ and for all $n\in\N$. Since $\tau<\mu$ we obtain
    \begin{align}\label{16}
	\left\{u_n^+\right\}_{n \geq 1} \subseteq \Lp{\mu} \text{ is bounded}.
    \end{align}

    Assume that $N\neq p$. From hypothesis H($f$)(iii) it is clear that we may assume $\mu<r<p^*$. Then there exists $t\in (0,1)$ such that
    \begin{align*}
	\frac{1}{r}=\frac{1-t}{\mu}+\frac{t}{p^*}.
    \end{align*}
    Taking the interpolation inequality into account, see Papageorgiou-Winkert \cite[Proposition 2.3.17, p.\,116]{Papageorgiou-Winkert-2018}, we have
    \begin{align*}
	\left\|u_n^+\right\|_r\leq \left\|u_n^+\right\|_\mu^{1-t} \left\|u_n^+\right\|^t_{p^*},
    \end{align*}
    which by \eqref{16} implies that
    \begin{align}\label{17}
	\left\|u_n^+\right\|_r^r\leq c_{13}\left\|u_n^+\right\|^{tr} 
    \end{align}
    for some $c_{13}>0$ and for all $n\in\N$.
    
    From hypothesis H($f$)(i) we know that
    \begin{align}\label{18}
	f(x,s)s \leq c_{14} \left[1+s^r\right] 
    \end{align}
    for a.\,a.\,$x\in\Omega$, for all $s\geq 0$ and for some $c_{14}>0$.
    We choose $h=u_n^+\in\Wpzero{p}$ in \eqref{8}, that is,
    \begin{align*}
	&\left\|\nabla u_n^+\right\|_p^p+\left\|\nabla u_n^+\right\|_q^q -\lambda\into \tilde{u}_\lambda^{-\eta}u_n^+\,dx-\lambda\into a(x) \left(u_n^+\right)^{\tau}\,dx\\
	& \quad -\into f\left(x,u_n^+\right)u_n^+\,dx\leq \eps_n\quad\text{for all } n\in\N.
    \end{align*}
    From this it follows by using \eqref{17}, \eqref{18} and $1<\tau<p<r$
    \begin{align}\label{19}
	\left\|u_n^+\right\|^p \leq c_{15}\left[ 1+\left\|u_n^+\right\|^{tr}\right]
    \end{align}
    for some $c_{15}>0$ and for all $n\in\N$. The condition on $\mu$, see hypothesis H($f$)(iii), implies that $tr<p$. Then from \eqref{19} we infer
    \begin{align}\label{20}
	\left\{u_n^+\right\}_{n\geq 1} \subseteq \Wpzero{p} \text{ is bounded.}
    \end{align}
    
    If $N=p$, then we have by definition $p^*=\infty$. The Sobolev embedding theorem  ensures that $\Wpzero{p}\hookrightarrow \Lp{\vartheta}$ for all $1\leq \vartheta<\infty$. So, in order to apply the previous arguments we need to replace $p^*$ by $\vartheta>r>\mu$ and choose $t \in (0,1)$ such that
    \begin{align*}
	\frac{1}{r}=\frac{1-t}{\mu}+\frac{t}{\vartheta},
    \end{align*}
    which implies
    \begin{align*}
	tr=\frac{\vartheta(r-\mu)}{\vartheta-\mu}.
    \end{align*}
    Note that $\frac{\vartheta(r-\mu)}{\vartheta-\mu}\to r-\mu<p$ as $\vartheta\to+\infty$. So, for $\vartheta>r$ large enough, we see that $tr<p$ and again \eqref{20} holds.
    
    From \eqref{9} and \eqref{20} we infer that
    \begin{align*}
	\left\{u_n\right\}_{n \geq 1} \subseteq \Wpzero{p} \text{ is bounded}.
    \end{align*}
    So, we may assume that
    \begin{align}\label{21}
	u_n\weak u \quad\text{in }\Wpzero{p} \quad\text{and}\quad
	u_n\to u \quad\text{in }\Lp{r}.
    \end{align}
    We choose $h=u_n-u\in\Wpzero{p}$ in \eqref{8}, pass to the limit as $n\to\infty$ and use the convergence properties in \eqref{21}. This gives
    \begin{align*}
	\lim_{n\to\infty} \left[ \lan A_p(u_n),u_n-u\ran+\lan A_q(u_n),u_n-u\ran\right]=0
    \end{align*}
    and since $A_q$ is monotone we obtain
    \begin{align*}
	\lim_{n\to\infty} \left[ \lan A_p(u_n),u_n-u\ran+\lan A_q(u),u_n-u\ran\right]\leq 0.
    \end{align*}
    By \eqref{20} we then conclude that
    \begin{align*}
	\lim_{n\to\infty} \lan A_p(u_n),u_n-u\ran\leq 0.
    \end{align*}
    Applying Proposition \ref{proposition_1} shows that $u_n\to u$ in $\Wpzero{p}$ and so we conclude that $\psi_\lambda$ satisfies the C-condition.
\end{proof}

\begin{proposition}\label{proposition_4}
    If hypotheses H($a$) and H($f$) hold, then there exists $\hat{\lambda}>0$ such that for every $\lambda \in \left(0,\hat{\lambda}\right)$ we can find $\rho_\lambda>0$ for which we have
    \begin{align*}
	\psi_\lambda(0)=0<\inf \left[\psi_\lambda(u): \|u\|=\rho_\lambda\right]=m_\lambda.
    \end{align*}
\end{proposition}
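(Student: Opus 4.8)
The plan is to bound $\psi_\lambda$ from below on a sphere $\{\|u\|=\rho\}$ and to arrange that the only term of order $\rho^p$, namely the leading $\tfrac1p\|\nabla u\|_p^p$, dominates every competing contribution. Since $\psi_\lambda(0)=0$ is immediate, the task reduces to producing $\rho_\lambda>0$ and $\hat\lambda>0$ for which the lower bound is strictly positive whenever $0<\lambda<\hat\lambda$. The guiding heuristic is that each negative term either carries a factor $\lambda$ (the singular and the concave term) or a power of $\|u\|$ strictly above $p$ (the superlinear term from $f$), so that for $\rho$ and $\lambda$ small none of them can overcome $\tfrac1p\rho^p$.

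First I would estimate the three negative terms separately for $u$ with $\|u\|=\rho\le 1$. For the singular term I use \eqref{5}, H\"older's inequality and the embedding $\Wpzero{p}\hookrightarrow\Lp{s'}$ (recall $s'<p^*$) to get $\lambda\into\tilde{u}_\lambda^{-\eta}u\,dx\le c\lambda\|\tilde{u}_\lambda^{-\eta}\|_s\,\rho$. For the concave term H($a$) together with $\Wpzero{p}\hookrightarrow\Lp{\tau}$ gives $\tfrac\lambda\tau\into a(x)(u^+)^\tau\,dx\le c\lambda\rho^\tau$. For the perturbation I combine H($f$)(i) and H($f$)(iv): given $\eps>0$ there is $c_\eps>0$ with $F(x,s)\le\tfrac\eps q s^q+c_\eps s^r$ for all $s\ge0$, whence $\into F(x,u^+)\,dx\le\tfrac\eps q\|u^+\|_q^q+c_\eps c\,\rho^r$. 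Choosing $\eps\le\hat{\lambda}_1(q)$ lets the retained gradient term $\tfrac1q\|\nabla u\|_q^q$ absorb $\tfrac\eps q\|u^+\|_q^q$, since $\|\nabla u\|_q^q\ge\|\nabla u^+\|_q^q\ge\hat{\lambda}_1(q)\|u^+\|_q^q$, so that no power of $\rho$ below $p$ survives from $f$.

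Collecting these estimates yields, for $\|u\|=\rho\le1$,
\begin{align*}
\psi_\lambda(u)\ge \rho\left[\frac1p\rho^{p-1}-c_\eps c\,\rho^{r-1}-c\lambda\|\tilde{u}_\lambda^{-\eta}\|_s-c\lambda\rho^{\tau-1}\right].
\end{align*}
Because $1<\tau<p<r$, the $\lambda$-free part $\tfrac1p\rho^{p-1}-c_\eps c\,\rho^{r-1}$ is strictly positive on an interval $(0,\rho_*)$ and attains a positive maximum at some interior point, which I fix as $\rho_\lambda$ (it may be taken independent of $\lambda$). The two remaining terms in the bracket both carry the factor $\lambda$, so shrinking $\lambda$ below a suitable $\hat\lambda$ keeps the bracket positive for every $\lambda\in(0,\hat\lambda)$; taking the infimum over the sphere then gives $m_\lambda>0=\psi_\lambda(0)$, as required.

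The step I expect to require the most care is the singular term. In contrast to the purely concave--convex situation, here the competitor of lowest order in $\rho$ is the \emph{linear} term, and its coefficient is $\lambda\|\tilde{u}_\lambda^{-\eta}\|_s$ rather than a fixed multiple of $\lambda$. Since $\tilde{u}_\lambda\to0$ as $\lambda\to0^+$, the factor $\|\tilde{u}_\lambda^{-\eta}\|_s$ grows, so one must verify that the product $\lambda\|\tilde{u}_\lambda^{-\eta}\|_s$ nevertheless stays small as $\lambda\to0^+$: only then can it be dominated by the \emph{capped} positive quantity $\tfrac1p\rho_\lambda^{p-1}-c_\eps c\,\rho_\lambda^{r-1}$, which cannot be enlarged by varying $\rho_\lambda$. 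Tracking this product through the Lazer--McKenna estimate underlying \eqref{5}, rather than through a crude use of the monotonicity of $\lambda\mapsto\tilde{u}_\lambda$, is the real content of the argument; everything else is routine Sobolev bookkeeping.
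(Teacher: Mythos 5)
Your argument follows essentially the same route as the paper's proof: the same term\--by\--term estimates (H\"older on the singular term using \eqref{5}, the trivial bound on the concave term via H($a$), the bound $F(x,s)\le\frac{\eps}{q}s^q+c_\eps s^r$ from H($f$)(i),(iv)), and the same absorption of $\frac{\eps}{q}\|u\|_q^q$ into $\frac1q\|\nabla u\|_q^q$ through the principal eigenvalue $\hat\lambda_1(q)$. The only structural difference is the last step: you freeze $\rho_\lambda$ at the maximizer of the $\lambda$-free expression $\frac1p\rho^{p-1}-c_\eps c\,\rho^{r-1}$ and then shrink $\lambda$, whereas the paper writes $\psi_\lambda(u)\ge\l[\frac1p-c_{18}k_\lambda(\|u\|)\r]\|u\|^p$ with $k_\lambda(t)=2\lambda t^{1-p}+(\lambda+1)t^{r-p}$ and takes $\rho_\lambda=t_0(\lambda)$ to be the minimizer of $k_\lambda$, using $k_\lambda(t_0(\lambda))\to0$ as $\lambda\to0^+$. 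These two finishes are interchangeable.

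The one point at which your text is not yet a proof is precisely the point you flag yourself: you assert that one ``must verify that the product $\lambda\|\tilde u_\lambda^{-\eta}\|_s$ stays small as $\lambda\to0^+$,'' but you never verify it, and the claim is not innocuous. By Proposition \ref{proposition_9} the map $\lambda\mapsto\tilde u_\lambda$ is increasing, so $\tilde u_\lambda$ decreases and $\|\tilde u_\lambda^{-\eta}\|_s$ \emph{increases} as $\lambda\downarrow0$; the Lazer--McKenna bound behind \eqref{5} rests on $\hat u_1\le c_4\tilde u_\lambda^{s}$ with $c_4=c_4(\lambda)$, and nothing in the argument controls $c_4(\lambda)$ as $\lambda\to0^+$. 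Heuristically, for small $\lambda$ the $q$-Laplacian dominates in \eqref{problem2} and $\tilde u_\lambda$ behaves like $\lambda^{1/(q-\tau)}$, so $\lambda\|\tilde u_\lambda^{-\eta}\|_s$ behaves like $\lambda^{1-\eta/(q-\tau)}$, and the exponent is positive only when $\eta<q-\tau$ --- a restriction not contained in H($a$), H($f$). So as written your bracket need not become positive for small $\lambda$, and the step you defer is a genuine obligation, not routine bookkeeping. In fairness, the paper is no more careful here: in \eqref{23} it absorbs $\|\tilde u_\lambda^{-\eta}\|_s$ into a constant $c_{17}$ and thereafter treats $c_{18}$ as independent of $\lambda$, which is exactly what makes the conclusion $k_\lambda(t_0)\to0$ usable. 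You have correctly located the delicate point of the proposition; a complete argument needs either a quantitative lower bound $\tilde u_\lambda\ge c\,\lambda^{\alpha}\hat d$ with $\alpha\eta<1$, or an estimate of the singular term that does not pass through $\lambda\|\tilde u_\lambda^{-\eta}\|_s\|u\|_{s'}$.
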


\begin{proof}
    Hypotheses H($f$)(i), (iv) imply that for a given $\eps>0$ we can find $c_{16}=c_{16}(\eps)>0$ such that
    \begin{align}\label{22}
	F(x,s)\leq \frac{\eps}{q}s^q +c_{16}s^r \quad\text{for a.\,a.\,}x\in\Omega \text{ and for all }s\geq 0.
    \end{align}
    Recall that $\tilde{u}_\lambda^{-\eta} \in \Lp{s}$ with $s>N$, see \eqref{5}. We choose $s>N$ large enough such that $s'<p^*$. Then, by H\"older's inequality, we have
    \begin{align}\label{23}
	\lambda \into \tilde{u}_\lambda^{-\eta}u\,dx \leq \lambda c_{17}\|u\|\quad\text{for some }c_{17}>0.
    \end{align}
    Moreover, one gets
    \begin{align}\label{24}
	\frac{\lambda}{\tau} \into a(x) |u|^\tau\,dx\leq \frac{\lambda \|a\|_\infty}{\tau} \|u\|^\tau.
    \end{align}
    Applying \eqref{22}, \eqref{23} and \eqref{24} leads to
    \begin{align}\label{24a}
     \psi_\lambda(u)
     \geq \frac{1}{p} \|\nabla u\|_p^p +\frac{1}{q} \left[\|\nabla u\|_q^q-\eps \|u\|_q^q\right]-c_{18} \left[ \|u\|^r+\lambda \left(\|u\|+\|u\|^\tau\right)\right]
    \end{align}
    for some $c_{18}>0$. Let $\hat{\lambda}_1(q)>0$ be the principal eigenvalue of $\left(-\Delta_q,\Wpzero{q}\right)$. Then, from the variational characterization of $\hat{\lambda}_1(q)$, see Gasi\'{n}ski-Papageorgiou \cite[p.\,732]{Gasinski-Papageorgiou-2006}, we obtain
    \begin{align*}
	\frac{1}{q} \left[\|\nabla u\|_q^q-\eps\|u\|_q^q\right] \geq \frac{1}{q} \left[1-\frac{\eps}{\hat{\lambda}_1(q)}\right] \|\nabla u\|_q^q.
    \end{align*}
    Choosing $\eps \in \left(0,\hat{\lambda}_1(q)\right)$ we infer that
    \begin{align}\label{25}
	\frac{1}{q} \left[\|\nabla u\|_q^q-\eps \|u\|_q^q\right]>0.
    \end{align}
    Since $1<\tau<r$, it holds
    \begin{align}\label{26}
	\|u\|^\tau \leq \|u\|+\|u\|^r.
    \end{align}
    Applying \eqref{25} and \eqref{26} to \eqref{24a} gives
    \begin{align}\label{27}
    \begin{split}
	\psi_\lambda(u)
	&\geq \frac{1}{p} \|u\|^p -c_{18} \left[ 2\lambda \|u\|+(\lambda+1)\|u\|^{r}\right]\\
    &\geq \l[ \frac{1}{p}  -c_{18} \left( 2\lambda \|u\|^{1-p}+(\lambda+1)\|u\|^{r-p}\r)\right] \|u\|^p.
    \end{split}
    \end{align}
    We consider now the function
    \begin{align*}
	k_\lambda(t)=2\lambda t^{1-p}+(\lambda+1)t^{r-p}\quad\text{for all }t>0.
    \end{align*}
    It is clear that $k_\lambda \in C^1(0,\infty)$ and since $1<p<r$ we see that
    \begin{align*}
	k_\lambda(t)\to +\infty \quad\text{as }t\to 0^+ \text{ and as }t\to +\infty.
    \end{align*}
    Hence, there exists $t_0>0$ such that
    \begin{align*}
	k_\lambda(t_0)=\min \left[k_\lambda(t): t>0\right],
    \end{align*}
    which implies that $k_\lambda'(t_0)=0$. Therefore,
    \begin{align*}
	2\lambda (p-1)t_0^{-p}=(r-p)(\lambda+1)t_0^{r-p-1}.
    \end{align*}
    From this we deduce that
    \begin{align*}
	t_0=t_0(\lambda)=\left[\frac{2\lambda(p-1)}{(r-p)(\lambda+1)}\right]^{\frac{1}{r-1}}.
    \end{align*}
    We have
    \begin{align*}
	k_\lambda(t_0)
	=2\lambda \frac{(r-p)(\lambda+1)^{\frac{p-1}{r-1}}}{(2\lambda(p-1))^{\frac{p-1}{r-1}}}
	+(\lambda+1) \frac{(2\lambda(p-1))^{\frac{r-p}{r-1}}}{((r-p)(\lambda+1))^{\frac{r-p}{r-1}}}.
    \end{align*}
    Since $1<p<r$ we see that
    \begin{align*}
	k_\lambda(t_0)\to 0 \quad\text{as }\lambda \to 0^+.
    \end{align*}
    Therefore, we can find $\hat{\lambda}>0$ such that
    \begin{align*}
	k_\lambda(t_0)<\frac{1}{pc_{18}}\quad\text{for all }\lambda \in \left(0,\hat{\lambda}\right).
    \end{align*}
    Then, by \eqref{27} we see that
    \begin{align*}
	\psi_\lambda(u)>0=\psi_\lambda(0)\quad\text{for all }\|u\|=t_0(\lambda)=\rho_\lambda \text{ and for all }\lambda \in \left(0,\hat{\lambda}\right).
    \end{align*}
\end{proof}

From hypothesis H($f$)(ii) we see that for every $u\in\interior$ we have
\begin{align}\label{28}
    \psi_\lambda(tu)\to -\infty \quad\text{as }t\to +\infty.
\end{align}

\begin{proposition}\label{proposition_5}
    If hypotheses H($a$) and H($f$) hold and if $\lambda \in \left(0,\hat{\lambda}\right)$, then problem \eqref{problem3} admits a solution $\overline{u}_\lambda \in \interior$.
\end{proposition}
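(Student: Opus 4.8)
The plan is to realize $\overline{u}_\lambda$ as a mountain pass point of the energy functional $\psi_\lambda$ and then to upgrade its regularity and positivity. First I would check the mountain pass geometry. Proposition \ref{proposition_4} supplies, for $\lambda\in(0,\hat{\lambda})$, a radius $\rho_\lambda>0$ with
\[
\psi_\lambda(0)=0<m_\lambda=\inf\left[\psi_\lambda(u):\|u\|=\rho_\lambda\right].
\]
Fixing some $u\in\interior$ and invoking \eqref{28}, I would pick $t>0$ so large that $e:=tu$ satisfies $\|e\|>\rho_\lambda$ and $\psi_\lambda(e)<0$. Since $\psi_\lambda$ satisfies the C-condition by Proposition \ref{proposition_3}, all the assumptions of the mountain pass theorem hold, and I obtain a critical point $\overline{u}_\lambda\in\Wpzero{p}$ with $\psi_\lambda(\overline{u}_\lambda)\geq m_\lambda>0=\psi_\lambda(0)$; in particular $\overline{u}_\lambda\neq 0$.

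Next I would extract the Euler equation and prove nonnegativity. From $\psi_\lambda'(\overline{u}_\lambda)=0$ we get
\begin{align*}
\lan A_p(\overline{u}_\lambda),h\ran+\lan A_q(\overline{u}_\lambda),h\ran
&=\lambda\into \tilde{u}_\lambda^{-\eta}h\,dx+\lambda\into a(x)\left(\overline{u}_\lambda^+\right)^{\tau-1}h\,dx\\
&\quad+\into f\left(x,\overline{u}_\lambda^+\right)h\,dx
\end{align*}
for all $h\in\Wpzero{p}$. Testing with $h=-\overline{u}_\lambda^-\in\Wpzero{p}$ and noting that $\left(\overline{u}_\lambda^+\right)^{\tau-1}\overline{u}_\lambda^-=0$ and $f\left(x,\overline{u}_\lambda^+\right)\overline{u}_\lambda^-=0$ pointwise, while the singular contribution $\lambda\into\tilde{u}_\lambda^{-\eta}\overline{u}_\lambda^-\,dx\geq 0$ enters with a negative sign, I would arrive at $\left\|\nabla\overline{u}_\lambda^-\right\|_p^p+\left\|\nabla\overline{u}_\lambda^-\right\|_q^q\leq 0$, forcing $\overline{u}_\lambda^-=0$. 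Hence $\overline{u}_\lambda\geq 0$, $\overline{u}_\lambda\neq 0$, and $\overline{u}_\lambda$ is a nontrivial weak solution of \eqref{problem3}.

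Finally I would push $\overline{u}_\lambda$ into $\interior$. By \eqref{5} the fixed singular datum satisfies $\tilde{u}_\lambda^{-\eta}\in\Lp{s}$ with $s>N$, and hypothesis H($f$)(i) bounds $f(\cdot,\overline{u}_\lambda)$ by a Sobolev-subcritical power; thus the $L^\infty$-bound of Marino-Winkert \cite{Marino-Winkert-2019} gives $\overline{u}_\lambda\in\Linf$, and the nonlinear regularity theory of Lieberman \cite{Lieberman-1991} then yields $\overline{u}_\lambda\in C^1_0(\close)_+\setminus\{0\}$. Setting $\rho=\|\overline{u}_\lambda\|_\infty$ and taking $\hat{\xi}_\rho$ from hypothesis H($f$)(v), the facts that $\lambda\tilde{u}_\lambda^{-\eta}\geq 0$, $a\geq a_0>0$ and $f\geq 0$ give $\Delta_p\overline{u}_\lambda+\Delta_q\overline{u}_\lambda\leq\hat{\xi}_\rho\overline{u}_\lambda^{p-1}$, so the nonlinear maximum principle of Pucci-Serrin \cite{Pucci-Serrin-2007} places $\overline{u}_\lambda\in\interior$.

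The routine parts are the mountain pass step and the sign argument, which follow directly from Propositions \ref{proposition_3} and \ref{proposition_4} together with \eqref{28}. I expect the main obstacle to be the regularity bootstrap: the singular term $\tilde{u}_\lambda^{-\eta}$ is only $\Lp{s}$-integrable rather than bounded, so one must verify that the integrability exponent $s>N$ secured in \eqref{5} is high enough for the $L^\infty$- and $C^{1,\alpha}$-regularity results to apply before the strong maximum principle can be invoked.
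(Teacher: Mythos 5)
Your proposal is correct and takes essentially the same route as the paper's proof: the mountain pass theorem applied via Propositions \ref{proposition_3}, \ref{proposition_4} and \eqref{28}, the sign argument by testing with $h=-\overline{u}_\lambda^-$, and then the $L^\infty$-estimate, Lieberman regularity and the Pucci--Serrin maximum principle exactly as in the proof of Proposition \ref{proposition_2}. The extra care you take with the integrability of $\tilde{u}_\lambda^{-\eta}\in\Lp{s}$, $s>N$, is precisely what \eqref{5} is set up to guarantee, so no gap remains.
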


\begin{proof}
    Propositions \ref{proposition_3}, \ref{proposition_4} and \eqref{28} permit the use of the mountain pass theorem. So, we can find $\overline{u}_\lambda \in \Wpzero{p}$ such that
    \begin{align}\label{29}
	\overline{u}_\lambda \in K_{\psi_\lambda}\quad\text{and}\quad \psi_\lambda(0)=0<m_\lambda\leq \psi_\lambda (\overline{u}_\lambda).
    \end{align}
    From \eqref{29} we see that $\overline{u}_\lambda\neq 0$ and $\psi_\lambda'(\overline{u}_\lambda)=0$, that is,
    \begin{align}\label{30}
      \begin{split}
	&\lan A_p(\overline{u}_\lambda),h\ran +\lan A_q(\overline{u}_\lambda),h\ran\\
	&=\lambda \into \tilde{u}_\lambda^{-\eta}h\,dx+\lambda \into a(x) \left(\overline{u}_\lambda^+\right)^{\tau-1}h\,dx
	+\into f\left(x,\overline{u}_\lambda^+\right)h\,dx
      \end{split}
    \end{align}
    for all $h\in \Wpzero{p}$. We choose $h=-\overline{u}_\lambda^-\in\Wpzero{p}$ in \eqref{30} which shows that
    \begin{align*}
	\left\|\overline{u}_\lambda^-\right\|^p \leq 0.
    \end{align*}
    Thus, $\overline{u}_\lambda\geq 0$ with $\overline{u}_\lambda\neq 0$.

    From \eqref{30} we know that $\overline{u}_\lambda$ is a positive solution of \eqref{problem3} with $\lambda \in\left(0,\hat{\lambda}\right)$. This means
    \begin{align*}
      &-\Delta_p \overline{u}_\lambda-\Delta_q \overline{u}_\lambda = \lambda\tilde{u}_\lambda^{-\eta}+\lambda a(x)\overline{u}_\lambda^{\tau-1}+f(x,\overline{u}_\lambda)\quad \text{in } \Omega, \quad \overline{u}_\lambda\big|_{\partial \Omega}=0.
    \end{align*}
    As before, see the proof of Proposition \ref{proposition_2}, using the nonlinear regularity theory, we have $\overline{u}_\lambda\in C^1_0(\overline{\Omega})_+\setminus\{0\}$. The nonlinear maximum principle, see Pucci-Serrin \cite[pp.\,111, 120]{Pucci-Serrin-2007} implies that $\overline{u}_\lambda \in \interior$.
\end{proof}

\begin{proposition}\label{proposition_6}
    If hypotheses H($a$) and H($f$) hold and if $\lambda \in \left(0,\hat{\lambda}\right)$, then $\tilde{u}_\lambda \leq \overline{u}_\lambda$.
\end{proposition}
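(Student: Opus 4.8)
The plan is to exploit that $\overline{u}_\lambda$ is a supersolution of the auxiliary problem \eqref{problem2} and then to recover $\tilde{u}_\lambda$ as a truncated minimizer that is forced to stay below $\overline{u}_\lambda$, so that the desired ordering follows from the uniqueness statement of Proposition \ref{proposition_2}. First I would observe that, since $f\geq 0$ by H($f$)(i) and $\tilde{u}_\lambda^{-\eta}\geq 0$, the weak formulation \eqref{30} of $\overline{u}_\lambda$ gives $\lan A_p(\overline{u}_\lambda),h\ran+\lan A_q(\overline{u}_\lambda),h\ran \geq \lambda\into a(x)\overline{u}_\lambda^{\tau-1}h\,dx$ for all $h\in\Wpzero{p}$ with $h\geq 0$; that is, $\overline{u}_\lambda\in\interior$ is a positive supersolution of \eqref{problem2}.

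Next I would freeze the concave reaction of \eqref{problem2} at $\overline{u}_\lambda$ from above. Concretely, I introduce the \Cara function $\hat\beta(x,s)=\lambda a(x)(s^+)^{\tau-1}$ for $s\leq \overline{u}_\lambda(x)$ and $\hat\beta(x,s)=\lambda a(x)\overline{u}_\lambda(x)^{\tau-1}$ for $s>\overline{u}_\lambda(x)$, let $\hat B(x,s)=\int_0^s\hat\beta(x,t)\,dt$ be its primitive, and consider the $C^1$-functional $\hat\gamma_\lambda(u)=\frac1p\|\nabla u\|_p^p+\frac1q\|\nabla u\|_q^q-\into \hat B(x,u)\,dx$ on $\Wpzero{p}$. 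Because $\overline{u}_\lambda\in\Linf$, the primitive $\hat B(x,\cdot)$ grows at most linearly, so $\hat\gamma_\lambda$ is coercive (the term $\frac1p\|\nabla u\|_p^p$ dominates) and, by the compact Sobolev embedding, sequentially weakly lower semicontinuous; hence it admits a global minimizer $u_0\in\Wpzero{p}$. Testing $\hat\gamma_\lambda'(u_0)=0$ with $-u_0^-$ yields $u_0\geq 0$, and evaluating $\hat\gamma_\lambda$ along $t\overline{u}_\lambda$ for small $t>0$, where the truncation is inactive and the $t^\tau$-term dominates since $\tau<q<p$, gives $\hat\gamma_\lambda(u_0)<0=\hat\gamma_\lambda(0)$, so $u_0\neq 0$.

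The main step, and the place where the argument really has to work, is to show $u_0\leq \overline{u}_\lambda$. For this I would subtract the weak formulation of $\hat\gamma_\lambda'(u_0)=0$ from the supersolution inequality for $\overline{u}_\lambda$ and test with $h=(u_0-\overline{u}_\lambda)^+\geq 0$. On the set $\{u_0>\overline{u}_\lambda\}$ the truncation is active, so $\hat\beta(x,u_0)=\lambda a(x)\overline{u}_\lambda^{\tau-1}$ exactly cancels the reaction coming from the supersolution inequality, and what remains is $\lan A_p(u_0)-A_p(\overline{u}_\lambda),h\ran+\lan A_q(u_0)-A_q(\overline{u}_\lambda),h\ran\leq 0$. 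Since $A_p$ and $A_q$ are monotone by Proposition \ref{proposition_1}, this forces $(u_0-\overline{u}_\lambda)^+=0$, i.e. $u_0\leq\overline{u}_\lambda$. It is precisely this cancellation, made possible by freezing the concave term at $\overline{u}_\lambda$, that circumvents the obstacle that the reaction $s\mapsto \lambda a(x)s^{\tau-1}$ is increasing and hence not amenable to a naive comparison principle.

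Finally, on $0\leq u_0\leq\overline{u}_\lambda$ the truncation is inactive, so $\hat\beta(x,u_0)=\lambda a(x)u_0^{\tau-1}$ and $u_0$ is in fact a positive solution of \eqref{problem2}. By the uniqueness part of Proposition \ref{proposition_2} we conclude $u_0=\tilde{u}_\lambda$, and therefore $\tilde{u}_\lambda=u_0\leq \overline{u}_\lambda$, as claimed.
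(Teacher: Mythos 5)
Your proposal is correct and follows essentially the same route as the paper: the paper likewise truncates the concave reaction of \eqref{problem2} at $\overline{u}_\lambda$, minimizes the associated coercive functional, shows the minimizer is nontrivial, nonnegative and bounded above by $\overline{u}_\lambda$ via the test function $(\,\cdot\,-\overline{u}_\lambda)^+$ together with $f\geq 0$, and then invokes the uniqueness from Proposition \ref{proposition_2}. The only cosmetic difference is that you phrase the comparison step through an explicit supersolution inequality, while the paper tests directly against the equation \eqref{30} satisfied by $\overline{u}_\lambda$.
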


\begin{proof}
    We introduce the Carath\'{e}odory function $g_\lambda\colon \Omega\times\R\to\R$ defined by
    \begin{align}\label{31}
	g_\lambda(x,s)=
	\begin{cases}
	    \lambda a(x) \left(s^+\right)^{\tau-1} &\text{if }s\leq \overline{u}_\lambda(x),\\
	    \lambda a(x) \overline{u}_\lambda(x)^{\tau-1} &\text{if }\overline{u}_\lambda(x)<s.
	\end{cases}
    \end{align}
    We set $G_\lambda(x,s)=\int^s_0g_\lambda(x,t)\,dt$ and consider the $C^1$-functional $\sigma_\lambda\colon\Wpzero{p}\to\R$ defined by
    \begin{align*}
	\sigma_\lambda (u)
	= \frac{1}{p}\|\nabla u\|_p^p+\frac{1}{q}\|\nabla u\|_q^q-\into G_\lambda(x,u)\,dx\quad\text{for all }u \in \Wpzero{p}.
    \end{align*}
    From \eqref{31} it is clear that $\sigma_\lambda\colon\Wpzero{p}\to\R$ is coercive. Moreover, by the Sobolev embedding, we have that $\sigma_\lambda\colon\Wpzero{p}\to\R$ is sequentially weakly lower semicontinuous. Then, by the Weierstra\ss-Tonelli theorem, we can find $\hat{u}_\lambda\in\Wpzero{p}$ such that
    \begin{align}\label{32}
	\sigma_\lambda\left(\hat{u}_\lambda\right)=
	\min \left[\sigma_\lambda(u):u\in\Wpzero{p}\right].
    \end{align}
    Since $\tau<q<p$, we have $\sigma_\lambda\left(\hat{u}_\lambda\right)<0=\sigma_\lambda(0)$ which implies $\hat{u}_\lambda\neq 0$.

    From \eqref{32} we have $\sigma_\lambda'\left(\hat{u}_\lambda\right)=0$, that is,
    \begin{align}\label{33}
	\lan A_p \left(\hat{u}_\lambda\right),h\ran +\lan A_q\left(\hat{u}_\lambda\right),h\ran 
	=\into g_\lambda \left(x,\hat{u}_\lambda\right)h\,dx\quad\text{for all }h\in\Wpzero{p}.
    \end{align}
    First, we choose $h=-\hat{u}_\lambda^-\in\Wpzero{p}$ in \eqref{33}. Then, by the definition of the truncation in \eqref{31} we easily see that $\|\hat{u}_\lambda^-\|^p \leq 0$ and so, $\hat{u}_\lambda\geq 0$ with $\hat{u}_\lambda\neq 0$. 
    
    Next, we choose $h=\left(\hat{u}_\lambda-\overline{u}_\lambda\right)^+\in\Wpzero{p}$ in \eqref{33} which gives, due to \eqref{31} and $f\geq 0$,
    \begin{align*}
	&\l\lan A_p\l(\hat{u}_\lambda\r),\left(\hat{u}_\lambda-\overline{u}_\lambda\right)^+ \r\ran
	+\l\lan A_q\l(\hat{u}_\lambda\r),\left(\hat{u}_\lambda-\overline{u}_\lambda\right)^+\r\ran\\
	&= \into \lambda a(x) \overline{u}_\lambda^{\tau-1} \left(\hat{u}_\lambda-\overline{u}_\lambda\right)^+\,dx\\
	& \leq \into \l[ \lambda \tilde{u}_\lambda^{-\eta}+\lambda a(x) \overline{u}_\lambda^{\tau-1}+f\l(x,\overline{u}_\lambda\r)\r] \left(\hat{u}_\lambda-\overline{u}_\lambda\right)^+\,dx\\
	&= \l\lan A_p\l(\overline{u}_\lambda\r),\left(\hat{u}_\lambda-\overline{u}_\lambda\right)^+\r\ran +\l\lan A_q\l(\overline{u}_\lambda\r),\left(\hat{u}_\lambda-\overline{u}_\lambda\right)^+\r\ran.
    \end{align*}
    This shows that $\hat{u}_\lambda\leq \overline{u}_\lambda$. We have proved that
    \begin{align*}
	\hat{u}_\lambda \in \left[0,\overline{u}_\lambda\right], \ \hat{u}_\lambda\neq 0.
    \end{align*}
    Hence, $\hat{u}_\lambda$ is a positive solution of \eqref{problem2} and due to Proposition \ref{proposition_2} we know that $\hat{u}_\lambda=\tilde{u}_\lambda \in \interior$. Therefore, $\tilde{u}_\lambda \leq \overline{u}_\lambda$ for all $\lambda\in \left(0,\hat{\lambda}\right)$.
\end{proof}

Now we are able to establish the nonemptiness of the set $\mathcal{L}$ (being the set of all admissible parameters) determine the regularity of the elements in the solution set $\mathcal{S}_\lambda$.

\begin{proposition}\label{proposition_7}
    If hypotheses H($a$) and H($f$) hold, then $\mathcal{L}\neq \emptyset$ and, for every $\lambda>0$, $\mathcal{S}_\lambda \subseteq \interior$.
\end{proposition}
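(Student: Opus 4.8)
The plan is to prove the two assertions separately, and in both to shift the difficulty caused by the singular term $s^{-\eta}$ onto the already-constructed function $\tilde{u}_\lambda$, whose negative power is controlled by \eqref{5}. I begin with the regularity claim $\mathcal{S}_\lambda\subseteq\interior$, and the first order of business is an a priori lower bound. Fix $\lambda>0$ and let $u\in\mathcal{S}_\lambda$. Since $u^{-\eta}\geq 0$ and $f\geq 0$ (by \eqref{1} and H($f$)(i)), testing the equation for $u$ with nonnegative functions shows that $u$ is a weak supersolution of the auxiliary problem \eqref{problem2}, that is, $\lan A_p(u),\varphi\ran+\lan A_q(u),\varphi\ran\geq\lambda\into a(x)u^{\tau-1}\varphi\,dx$ for all $\varphi\geq 0$. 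I would then repeat the truncation-and-minimization argument of Proposition \ref{proposition_6} verbatim, now cutting off at $u$ instead of at $\overline{u}_\lambda$: minimizing the coercive, sequentially weakly lower semicontinuous functional attached to the truncation of $\lambda a(x)(\cdot)^{\tau-1}$ at $u$ yields some $w\in[0,u]$ with $w\neq 0$ solving \eqref{problem2}, and the uniqueness in Proposition \ref{proposition_2} forces $w=\tilde{u}_\lambda$. Hence $\tilde{u}_\lambda\leq u$.

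This lower bound is the crucial step, because it tames the singularity: $0\leq u^{-\eta}\leq\tilde{u}_\lambda^{-\eta}$, and by \eqref{5} the right-hand side lies in $\Lp{s}$ with $s>N$. Consequently the reaction of \eqref{problem} evaluated at $u$ is dominated by $\hat{h}(x)+c\,u^{r-1}$ with $\hat{h}\in\Lp{s}$, $s>N$, and $r<p^*$ by H($f$)(i), so the standard boundedness result via Moser iteration (Marino-Winkert \cite{Marino-Winkert-2019}) gives $u\in\Linf$. Since $\tilde{u}_\lambda\in\interior$ there is $c>0$ with $c\hat{d}\leq\tilde{u}_\lambda\leq u$, so the singular term remains in $\Lp{s}$ up to the boundary and Lieberman's nonlinear regularity theory \cite{Lieberman-1991} applies, giving $u\in C^1_0(\close)_+\setminus\{0\}$. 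Finally the two-sided information $c\hat{d}\leq u$ with $u\in C^1_0(\close)_+$ forces $\frac{\partial u}{\partial n}\big|_{\rand}\leq c\,\frac{\partial \hat{d}}{\partial n}\big|_{\rand}=-c<0$; together with $u>0$ in $\Omega$ and the nonlinear maximum principle of Pucci-Serrin \cite{Pucci-Serrin-2007} this places $u\in\interior$. Thus $\mathcal{S}_\lambda\subseteq\interior$ for every $\lambda>0$.

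For $\mathcal{L}\neq\emptyset$ I would show the stronger $\l(0,\hat{\lambda}\r)\subseteq\mathcal{L}$ by a sub-supersolution construction inside the order interval $[\tilde{u}_\lambda,\overline{u}_\lambda]$. Fix $\lambda\in\l(0,\hat{\lambda}\r)$. Proposition \ref{proposition_6} gives $\tilde{u}_\lambda\leq\overline{u}_\lambda$, whence $\overline{u}_\lambda^{-\eta}\leq\tilde{u}_\lambda^{-\eta}$; inserting this into the equation for $\overline{u}_\lambda$ from Proposition \ref{proposition_5} exhibits $\overline{u}_\lambda$ as a supersolution of \eqref{problem}, while $\tilde{u}_\lambda$ is a subsolution exactly as in the first paragraph. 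Because the truncation is cut off from below at $\tilde{u}_\lambda\geq c\hat{d}>0$, the singular term is never evaluated at $0$, so I can define a \Cara reaction $\hat{\beta}_\lambda$ by freezing the right-hand side of \eqref{problem} outside $[\tilde{u}_\lambda,\overline{u}_\lambda]$; its primitive produces a $C^1$, coercive, sequentially weakly lower semicontinuous energy functional on $\Wpzero{p}$ (the singular part contributes a term with an $\Lp{s}$-coefficient, handled precisely as for $\psi_\lambda$). A global minimizer $u_0$ then exists, and testing with $(\tilde{u}_\lambda-u_0)^+$ and $(u_0-\overline{u}_\lambda)^+$ and invoking the two inequalities shows $u_0\in[\tilde{u}_\lambda,\overline{u}_\lambda]$; on this interval $\hat{\beta}_\lambda$ agrees with the genuine reaction, so $u_0$ solves \eqref{problem}. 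Hence $\lambda\in\mathcal{L}$ and $\mathcal{L}\neq\emptyset$.

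The single genuinely delicate point is the regularity in the presence of the term $u^{-\eta}$: neither the $\Linf$ bound nor Lieberman's $C^1$ theory applies to it directly, since it blows up on $\rand$. The device that removes this obstacle is the comparison $\tilde{u}_\lambda\leq u$ combined with \eqref{5}, which replaces $u^{-\eta}$ by the fixed function $\tilde{u}_\lambda^{-\eta}\in\Lp{s}$, $s>N$, after which the usual subcritical machinery runs unchanged. Securing that comparison, and for $\mathcal{L}$ arranging the truncations so that the singularity is never tested at $0$, is therefore the heart of the matter; everything else is routine.
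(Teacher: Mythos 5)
Your proposal is correct and follows essentially the same route as the paper: the truncation of the reaction at the order interval $[\tilde{u}_\lambda,\overline{u}_\lambda]$, direct minimization of the resulting coercive functional, the two test-function comparisons to locate the minimizer in that interval, and then nonlinear regularity plus the maximum principle. The only presentational difference is that you establish the comparison $\tilde{u}_\lambda\leq u$ for arbitrary $u\in\mathcal{S}_\lambda$ up front to justify the regularity of all solutions (the paper records this bound only in the proposition immediately following, and is terser about how $\mathcal{S}_\lambda\subseteq\interior$ is obtained), which is a harmless and arguably cleaner ordering of the same ingredients.
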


\begin{proof}
    Let $\lambda \in \left(0,\hat{\lambda}\right)$. From Proposition \ref{proposition_6} we know that $\tilde{u}_\lambda\leq \overline{u}_\lambda$. So we can define the truncation $e_\lambda\colon\Omega\times\R\to\R$ of the reaction of problem \eqref{problem}
    \begin{align}\label{34}
      \begin{split}
	&e_\lambda(x,s)\\
	&=
	\begin{cases}
	    \lambda \left[ \tilde{u}_\lambda(x)^{-\eta}+a(x)\tilde{u}_\lambda(x)^{\tau-1}\right]+f\left(x,\tilde{u}_\lambda(x)\right) &\text{if }s<\tilde{u}_\lambda(x),\\
	    \lambda \left[ s^{-\eta}+a(x)s^{\tau-1}\right]+f(x,s) &\text{if }\tilde{u}_\lambda(x) \leq s\leq \overline{u}_\lambda(x),\\
	    \lambda \left[ \overline{u}_\lambda(x)^{-\eta}+a(x)\overline{u}_\lambda(x)^{\tau-1}\right]+f\left(x,\overline{u}_\lambda(x)\right) &\text{if }\overline{u}_\lambda(x) <s.
	\end{cases}
      \end{split}
    \end{align}
    This is a Carath\'{e}odory function. We set $E_\lambda(x,s)=\int^s_0 e_\lambda(x,t)\,dt$ and consider the $C^1$-functional $J_\lambda\colon\Wpzero{p}\to \R$ defined by
    \begin{align*}
	J_\lambda(u)=\frac{1}{p} \|\nabla u\|_p^p +\frac{1}{q} \|\nabla u\|_q^q-\into E_\lambda(x,u)\,dx \quad\text{for all }u \in \Wpzero{p}.
    \end{align*}
    From \eqref{34} we see that $J_\lambda\colon\Wpzero{p}\to \R$ is coercive and the Sobolev embedding theorem implies that $J$ is also sequentially weakly lower semicontinuous. Hence, its global minimizer $u_\lambda \in \Wpzero{p}$ exists, that is,
    \begin{align*}
	J_\lambda (u_\lambda)=\min\left[J_\lambda(u):u\in\Wpzero{p} \right].
    \end{align*}
    Hence, $J_\lambda'(u_\lambda)=0$ which means that
    \begin{align}\label{35}
	\left\lan A_p\l(u_\lambda\r),h\r\ran+\l\lan A_q\l(u_\lambda\r),h\r\ran =\into e_\lambda\l(x,u_\lambda\right)h\,dx \quad\text{for all } h \in \Wpzero{p}.
    \end{align}
    We choose $h=\left(u_\lambda-\overline{u}_\lambda\right)^+\in \Wpzero{p}$ in \eqref{35}. Then, by using \eqref{34} and Propositions \ref{proposition_6} and \ref{proposition_5} we obtain
    \begin{align*}
	& \l\lan A_p\l(u_\lambda\r),\left(u_\lambda-\overline{u}_\lambda\right)^+\r\ran
	+\l\lan A_q\l(u_\lambda\r),\left(u_\lambda-\overline{u}_\lambda\right)^+\r\ran\\
	& =\into \left(\lambda \l[\overline{u}_\lambda^{-\eta}+a(x)\overline{u}_\lambda^{\tau-1}\r]+f\l(x,\overline{u}_\lambda\r)\r) \left(u_\lambda-\overline{u}_\lambda\right)^+\,dx\\
	& \leq \into \l( \lambda \l[\tilde{u}_\lambda^{-\eta}+a(x)\overline{u}_\lambda^{\tau-1}\r]+f\l(x,\overline{u}_\lambda\r)\r) \left(u_\lambda-\overline{u}_\lambda\right)^+\,dx\\
	&=\l\lan A_p\l(\overline{u}_\lambda\r),\left(u_\lambda-\overline{u}_\lambda\right)^+\r\ran+\l\lan A_q\l(\overline{u}_\lambda\r),\left(u_\lambda-\overline{u}_\lambda\right)^+\r\ran.
    \end{align*}
    This shows that $u_\lambda \leq \overline{u}_\lambda$.
    
    Next, we choose $h=\left(\tilde{u}_\lambda-u_\lambda\right)^+\in\Wpzero{p}$ in \eqref{35}. Then, by \eqref{34} and hypotheses H($a$) as well as H($f$)(i) it follows
    \begin{align*}
	&\l\lan A_p \l(u_\lambda\r),\left(\tilde{u}_\lambda-u_\lambda\right)^+\r\ran+\l\lan A_q\l(u_\lambda\r),\left(\tilde{u}_\lambda-u_\lambda\right)^+\r\ran\\
	& =\into \l(\lambda \l[ \tilde{u}^{-\eta}+a(x)\tilde{u}_\lambda^{\tau-1}\r]+f\l(x,\tilde{u}_\lambda\r)\r) \left(\tilde{u}_\lambda-u_\lambda\right)^+\,dx\\
	&\geq \into \lambda \tilde{u}_\lambda^{-\eta} \left(\tilde{u}_\lambda-u_\lambda\right)^+\,dx\\
	&= \l \lan A_p\l(\tilde{u}_\lambda\r),\left(\tilde{u}_\lambda-u_\lambda\right)^+\r\ran+\l\lan A_q\l(\tilde{u}_\lambda\r),\left(\tilde{u}_\lambda-u_\lambda\right)^+\r\ran.
    \end{align*}
    Hence, $\tilde{u}_\lambda \leq u_\lambda$ and so we have proved that $u_\lambda \in \left[\tilde{u}_\lambda, \overline{u}_\lambda\r]$. Then, with view to \eqref{34} and \eqref{35}, we see that $u_\lambda$ is a positive solution of \eqref{problem} for $\lambda \in \l(0,\hat{\lambda}\r)$. In particular, we have
    \begin{align*}
	-\Delta_p u_\lambda(x)-\Delta_qu_\lambda(x)=\lambda u_\lambda(x)^{-\eta}+a_\lambda(x)u_\lambda(x)^{\tau-1}+f(x,u_\lambda(x))\quad\text{for a.\,a.\,}x\in\Omega.
    \end{align*}
    The nonlinear regularity theory, see Lieberman \cite{Lieberman-1991}, and the nonlinear maximum principle, see Pucci-Serrin \cite[pp.\,111 and 120]{Pucci-Serrin-2007}, imply that $u_\lambda \in \interior$.
    
    Concluding we can say that $\l(0,\hat{\lambda}\r)\subseteq \mathcal{L}$ which means that $\mathcal{L}$ is nonempty. Moreover, for all $\lambda>0$, $\mathcal{S}_\lambda\subseteq \interior$.
\end{proof}

Reasoning as in the proof of Proposition \ref{proposition_6} with $\overline{u}_\lambda$ replaced by $u \in \mathcal{S}_\lambda\subseteq \interior$, we obtain the following result.

\begin{proposition}
    If hypotheses H($a$) and H($f$) hold and if $\lambda \in \mathcal{L}$, then $\tilde{u}_\lambda\leq u$ for all $u \in \mathcal{S}_\lambda$. 
\end{proposition}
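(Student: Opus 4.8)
The plan is to repeat verbatim the scheme of Proposition \ref{proposition_6}, now with the fixed solution $u\in\mathcal{S}_\lambda\subseteq\interior$ playing the role previously held by $\overline{u}_\lambda$. Since $u>0$ in $\Omega$, the truncation
\begin{align*}
	g_\lambda(x,s)=
	\begin{cases}
	    \lambda a(x)\left(s^+\right)^{\tau-1} &\text{if }s\leq u(x),\\
	    \lambda a(x) u(x)^{\tau-1} &\text{if }u(x)<s
	\end{cases}
\end{align*}
is again a Carath\'{e}odory function, and with $G_\lambda(x,s)=\int_0^sg_\lambda(x,t)\,dt$ I would consider
\begin{align*}
	\sigma_\lambda(v)=\frac{1}{p}\|\nabla v\|_p^p+\frac{1}{q}\|\nabla v\|_q^q-\into G_\lambda(x,v)\,dx,
\end{align*}
which is coercive (the truncated term grows at most like $|v|^\tau$ with $\tau<q<p$) and sequentially weakly lower semicontinuous. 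Hence it has a global minimizer $\hat{u}_\lambda\in\Wpzero{p}$, and, exactly as in Proposition \ref{proposition_6}, the condition $\tau<q<p$ yields $\sigma_\lambda(\hat{u}_\lambda)<0=\sigma_\lambda(0)$, so $\hat{u}_\lambda\neq 0$.

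Next I would extract the sign and the two-sided bound from the Euler equation $\sigma_\lambda'(\hat{u}_\lambda)=0$. Testing with $h=-\hat{u}_\lambda^-$ gives $\|\hat{u}_\lambda^-\|^p\leq 0$, hence $\hat{u}_\lambda\geq0$, $\hat{u}_\lambda\neq0$. The decisive step is to test with $h=(\hat{u}_\lambda-u)^+$: on $\{\hat{u}_\lambda>u\}$ the truncation equals $\lambda a(x)u^{\tau-1}$, so using that $u$ solves \eqref{problem} and that $\lambda u^{-\eta}\geq0$ together with $f(x,u)\geq0$ (hypothesis H($f$)(i)), I obtain
\begin{align*}
	\lan A_p(\hat{u}_\lambda),(\hat{u}_\lambda-u)^+\ran+\lan A_q(\hat{u}_\lambda),(\hat{u}_\lambda-u)^+\ran
	&=\into \lambda a(x)u^{\tau-1}(\hat{u}_\lambda-u)^+\,dx\\
	&\leq \lan A_p(u),(\hat{u}_\lambda-u)^+\ran+\lan A_q(u),(\hat{u}_\lambda-u)^+\ran,
\end{align*}
which by the strict monotonicity of $A_p+A_q$ (Proposition \ref{proposition_1}) forces $(\hat{u}_\lambda-u)^+=0$, i.e.\ $\hat{u}_\lambda\leq u$.

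Thus $\hat{u}_\lambda\in[0,u]$ with $\hat{u}_\lambda\neq0$, and on this order interval $g_\lambda(x,\cdot)$ coincides with $\lambda a(x)(\cdot)^{\tau-1}$, so $\hat{u}_\lambda$ is in fact a positive solution of the auxiliary problem \eqref{problem2}. By the uniqueness part of Proposition \ref{proposition_2} I conclude $\hat{u}_\lambda=\tilde{u}_\lambda$, whence $\tilde{u}_\lambda\leq u$; since $u\in\mathcal{S}_\lambda$ was arbitrary, the claim follows. The only point requiring more care than in Proposition \ref{proposition_6} is the appearance of the singular term when $u$ is tested against $(\hat{u}_\lambda-u)^+$: one must know that $\into u^{-\eta}(\hat{u}_\lambda-u)^+\,dx$ is well defined and nonnegative. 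This is precisely where $u\in\interior$ enters — arguing as for \eqref{5} through the lemma of Lazer-McKenna \cite{Lazer-McKenna-1991}, $u^{-\eta}\in\Lp{s}$ for some $s>N$, so the integral is finite, and its nonnegativity is clear, which is all that the inequality above uses. I expect this integrability bookkeeping to be the only obstacle, the remainder being a verbatim transcription of Proposition \ref{proposition_6}.
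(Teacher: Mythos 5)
Your proposal is correct and follows exactly the route the paper intends: the paper gives no separate proof, merely remarking that one repeats the argument of Proposition \ref{proposition_6} with $\overline{u}_\lambda$ replaced by $u\in\mathcal{S}_\lambda\subseteq\interior$, which is precisely what you carry out. Your additional observation that $u\in\interior$ guarantees $u^{-\eta}\in\Lp{s}$ via the Lazer--McKenna lemma, so that testing the equation for $u$ against $(\hat{u}_\lambda-u)^+$ is legitimate, is a point the paper leaves implicit and is handled correctly.
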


Moreover, the map $\lambda \to \tilde{u}_\lambda$ from $(0,+\infty)$ into $C^1_0(\close)$ exhibits a strong monotonicity property which we will use in the sequel.

\begin{proposition}\label{proposition_9}
    If hypotheses H($a$) holds and if $0<\lambda<\lambda'$, then $\tilde{u}_{\lambda'}-\tilde{u}_\lambda \in \interior$.
\end{proposition}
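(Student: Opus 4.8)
The plan is to prove the claim in two stages: first the plain ordering $\tilde{u}_\lambda \leq \tilde{u}_{\lambda'}$, and then its strengthening to $\tilde{u}_{\lambda'}-\tilde{u}_\lambda\in\interior$ by means of a strong comparison principle. For the ordering I would freeze the larger problem below the smaller solution: define the \Cara function
\[
    \hat{\beta}(x,s)=\lambda' a(x)\left(\max\{s,\tilde{u}_\lambda(x)\}\right)^{\tau-1},
\]
whose primitive $\hat{B}$ yields the functional $\hat{\gamma}(u)=\frac{1}{p}\|\nabla u\|_p^p+\frac{1}{q}\|\nabla u\|_q^q-\into \hat{B}(x,u)\,dx$. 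Since $\tau<q<p$ this functional is coercive and, by the compact Sobolev embedding, sequentially weakly lower semicontinuous, so it has a global minimizer $u_0\in\Wpzero{p}$ solving $\lan A_p(u_0),h\ran+\lan A_q(u_0),h\ran=\into\hat{\beta}(x,u_0)h\,dx$. Testing this equation and the equation for $\tilde{u}_\lambda$ with $h=(\tilde{u}_\lambda-u_0)^+$ and subtracting, the right-hand side becomes $\into(\lambda-\lambda')a(x)\tilde{u}_\lambda^{\tau-1}(\tilde{u}_\lambda-u_0)^+\,dx\leq 0$ because $\lambda<\lambda'$, $a\geq a_0>0$ and $\tilde{u}_\lambda>0$, while the left-hand side is nonnegative by the monotonicity of $A_p$ and $A_q$; hence $(\tilde{u}_\lambda-u_0)^+=0$, i.e.\ $\tilde{u}_\lambda\leq u_0$. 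Since $\tilde{u}_\lambda\leq u_0$ the truncation is inactive, so $u_0$ solves \eqref{problem2} with $\lambda$ replaced by $\lambda'$ and is nontrivial; Proposition \ref{proposition_2} then forces $u_0=\tilde{u}_{\lambda'}$, giving $\tilde{u}_\lambda\leq\tilde{u}_{\lambda'}$.

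For the strong ordering I would compare the two right-hand sides. Writing $h_1=\lambda a(x)\tilde{u}_\lambda^{\tau-1}$ and $h_2=\lambda' a(x)\tilde{u}_{\lambda'}^{\tau-1}$, the monotonicity $\tilde{u}_\lambda\leq\tilde{u}_{\lambda'}$ together with $\tau>1$ yields $h_2-h_1\geq(\lambda'-\lambda)a(x)\tilde{u}_\lambda^{\tau-1}$. Since $\tilde{u}_\lambda\in\interior$, every compact $K\subseteq\Omega$ satisfies $\inf_K\tilde{u}_\lambda>0$, whence $h_2-h_1\geq(\lambda'-\lambda)a_0\left(\inf_K\tilde{u}_\lambda\right)^{\tau-1}>0$ a.e.\ on $K$; that is, $h_1\prec h_2$ in the sense of Section \ref{section_2}. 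Setting $\rho=\|\tilde{u}_{\lambda'}\|_\infty$ and adding the common term $\hat{\xi}_\rho s^{p-1}$ to both reactions, which preserves $\prec$ because $\tilde{u}_{\lambda'}\geq\tilde{u}_\lambda$, a strong comparison principle for the $(p,q)$-Laplacian applies and gives $\tilde{u}_{\lambda'}-\tilde{u}_\lambda\in D_+$. As $\tilde{u}_{\lambda'}-\tilde{u}_\lambda\in C^1_0(\close)$ vanishes on $\partial\Omega$, membership in $D_+$ coincides with membership in $\interior$, which is the assertion.

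The routine part is the ordering; the crux is the strong comparison step. The source gap $h_2-h_1$ degenerates to $0$ at $\partial\Omega$ as $\tilde{u}_\lambda\to 0$, so strict positivity is available only on compact interior subsets, which is exactly the content of the relation $\prec$. Producing the strict sign of the outward normal derivative on $\partial\Omega$ from a gap that vanishes there requires the Hopf-type boundary analysis contained in the comparison theorem, together with care about the degeneracy of the $(p,q)$-operator at points where the gradients vanish; this is where the real work lies, while the verification of $\prec$ and of the initial ordering is comparatively mechanical.
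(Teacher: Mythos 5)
Your proposal is correct and follows essentially the same route as the paper: first the ordering $\tilde{u}_\lambda\leq\tilde{u}_{\lambda'}$ via a truncation--minimization argument (the paper obtains it by repeating the proof of Proposition \ref{proposition_6}), then the observation that the difference of the two reactions dominates $(\lambda'-\lambda)a(x)\tilde{u}_\lambda^{\tau-1}\succ 0$, followed by an appeal to a strong comparison principle. The only difference is that you leave that comparison principle as an unreferenced black box (correctly identifying it as the crux), whereas the paper cites Gasi\'nski--Papageorgiou \cite[Proposition 3.2]{Gasinski-Papageorgiou-2019} for it.
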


\begin{proof}
    Following the proof of Proposition \ref{proposition_6} we can show that
    \begin{align}\label{36}
	\tilde{u}_\lambda \leq \tilde{u}_{\lambda'}.
    \end{align}
    From \eqref{36} we have
    \begin{align}\label{37}	
      \begin{split}
	-\Delta_p \tilde{u}_\lambda-\Delta_q\tilde{u}_\lambda
	& =\lambda a(x) \tilde{u}_\lambda^{\tau-1}\\
	& =\lambda'a(x)\tilde{u}_\lambda^{\tau-1}-\l(\lambda'-\lambda\r)\tilde{u}_\lambda^{\tau-1}\\
	&\leq \lambda'a(x)\tilde{u}_{\lambda'}^{\tau-1}\\
	&= -\Delta_p \tilde{u}_{\lambda'}-\Delta_q\tilde{u}_{\lambda'}.
      \end{split}
    \end{align}
    Note that $0\prec\l(\lambda'-\lambda\r)\tilde{u}_\lambda^{\tau-1}$. So, from \eqref{37} and Gasi{\'n}ski-Papageorgiou \cite[Proposition 3.2]{Gasinski-Papageorgiou-2019} we have
    \begin{align*}
	\tilde{u}_{\lambda'}-\tilde{u}_\lambda \in \interior.
    \end{align*}
\end{proof}

Next we are going to show that $\mathcal{L}$ is an interval.

\begin{proposition}\label{proposition_10}
    If hypotheses H($a$) and H($f$) hold and if $\lambda \in\mathcal{L}$ and $\mu \in (0,\lambda)$, then $\mu \in \mathcal{L}$.
\end{proposition}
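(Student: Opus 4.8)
The plan is to reproduce the sub--supersolution scheme from the proof of Proposition \ref{proposition_7}, but now using a solution at the larger parameter as the upper barrier. Since $\lambda\in\mathcal{L}$, fix some $u_\lambda\in\mathcal{S}_\lambda$; by Proposition \ref{proposition_7} we have $u_\lambda\in\interior$, and $u_\lambda$ solves $-\Delta_p u_\lambda-\Delta_q u_\lambda=\lambda[u_\lambda^{-\eta}+a(x)u_\lambda^{\tau-1}]+f(x,u_\lambda)$ in $\Omega$. Because $\mu<\lambda$ and the singular term, the concave term and $f$ are all nonnegative on $\interior$, we get $-\Delta_p u_\lambda-\Delta_q u_\lambda\geq \mu[u_\lambda^{-\eta}+a(x)u_\lambda^{\tau-1}]+f(x,u_\lambda)$, so $u_\lambda$ is a supersolution of \eqref{problem} at the parameter $\mu$. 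At the same time $\tilde{u}_\mu$ solves \eqref{problem2} at $\mu$ and, since $s^{-\eta},f\geq 0$, it is a subsolution of \eqref{problem} at $\mu$. Finally I would record the ordering $\tilde{u}_\mu\leq u_\lambda$: combining Proposition \ref{proposition_9} (which gives $\tilde{u}_\mu\leq\tilde{u}_\lambda$ for $\mu<\lambda$) with the established fact that $\tilde{u}_\lambda\leq u$ for every $u\in\mathcal{S}_\lambda$ yields $\tilde{u}_\mu\leq\tilde{u}_\lambda\leq u_\lambda$.

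With these two barriers in hand, I would truncate and neutralize the singular reaction exactly as in \eqref{34}: define the \Cara function
\begin{align*}
 \hat{e}_\mu(x,s)=
 \begin{cases}
 \mu\l[\tilde{u}_\mu(x)^{-\eta}+a(x)\tilde{u}_\mu(x)^{\tau-1}\r]+f(x,\tilde{u}_\mu(x)) &\text{if }s<\tilde{u}_\mu(x),\\
 \mu\l[s^{-\eta}+a(x)s^{\tau-1}\r]+f(x,s) &\text{if }\tilde{u}_\mu(x)\leq s\leq u_\lambda(x),\\
 \mu\l[u_\lambda(x)^{-\eta}+a(x)u_\lambda(x)^{\tau-1}\r]+f(x,u_\lambda(x)) &\text{if }u_\lambda(x)<s,
 \end{cases}
\end{align*}
set $\hat{E}_\mu(x,s)=\int_0^s\hat{e}_\mu(x,t)\,dt$ and consider $\hat{J}_\mu(u)=\frac1p\|\nabla u\|_p^p+\frac1q\|\nabla u\|_q^q-\into\hat{E}_\mu(x,u)\,dx$ on $\Wpzero{p}$. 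On the order interval the factor $s^{-\eta}$ is dominated by $\tilde{u}_\mu^{-\eta}\in\Lp{s}$ (see \eqref{5}), so the truncation removes the singularity and $\hat{e}_\mu(x,\cdot)$ is bounded above by an $\Lp{s}$-function; hence $\hat{J}_\mu\in C^1(\Wpzero{p})$ is coercive and sequentially weakly lower semicontinuous, and the Weierstra\ss--Tonelli theorem provides a global minimizer $u_\mu\in\Wpzero{p}$ with $\hat{J}_\mu'(u_\mu)=0$.

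It then remains to show $u_\mu\in[\tilde{u}_\mu,u_\lambda]$, after which $\hat{e}_\mu(x,u_\mu)$ coincides with the genuine reaction of \eqref{problem} at $\mu$ and $u_\mu$ is a positive solution. Testing $\hat{J}_\mu'(u_\mu)=0$ with $(u_\mu-u_\lambda)^+$ and using that $u_\lambda$ is a supersolution gives $\lan A_p(u_\mu)-A_p(u_\lambda),(u_\mu-u_\lambda)^+\ran+\lan A_q(u_\mu)-A_q(u_\lambda),(u_\mu-u_\lambda)^+\ran\leq 0$, whence $u_\mu\leq u_\lambda$ by the strict monotonicity of $A_p,A_q$ (Proposition \ref{proposition_1}); testing with $(\tilde{u}_\mu-u_\mu)^+$ and using that $\tilde{u}_\mu$ solves \eqref{problem2} at $\mu$ together with $s^{-\eta},f\geq 0$ gives the reverse comparison $\tilde{u}_\mu\leq u_\mu$. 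Consequently $u_\mu\in[\tilde{u}_\mu,u_\lambda]$ solves \eqref{problem} at $\mu$, and the nonlinear regularity theory of Lieberman \cite{Lieberman-1991} together with the maximum principle of Pucci--Serrin \cite{Pucci-Serrin-2007} yields $u_\mu\in\interior$; thus $\mu\in\mathcal{L}$.

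The routine parts are the two comparison inequalities and the verification that $\hat{J}_\mu$ is $C^1$ and coercive. The one point requiring care is the neutralization of the singular term: one must check that replacing $s^{-\eta}$ by the constant values $\tilde{u}_\mu^{-\eta}$ and $u_\lambda^{-\eta}$ outside the interval keeps $\hat{e}_\mu$ controlled by an $\Lp{s}$-function, so that $\hat{J}_\mu$ is differentiable and its critical equation reduces exactly to \eqref{problem} on $[\tilde{u}_\mu,u_\lambda]$.
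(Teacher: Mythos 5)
Your proposal is correct and follows essentially the same route as the paper: trap a solution between the subsolution $\tilde{u}_\mu$ (ordered below $u_\lambda$ via Propositions \ref{proposition_9} and the lower-bound property $\tilde{u}_\lambda\le u$ for $u\in\mathcal{S}_\lambda$) and the supersolution $u_\lambda\in\mathcal{S}_\lambda$, using exactly the truncation \eqref{38}, a global minimizer of the truncated functional, and the two comparison tests with $(u_\mu-u_\lambda)^+$ and $(\tilde{u}_\mu-u_\mu)^+$. Your added remark that the truncation keeps the singular term dominated by $\tilde{u}_\mu^{-\eta}\in\Lp{s}$ is the same justification the paper uses implicitly via \eqref{5}.
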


\begin{proof}
    Since $\lambda \in \mathcal{L}$ there exists $u_\lambda \in \mathcal{S}_\lambda \subseteq \interior$, see Proposition \ref{proposition_7}. From Propositions \ref{proposition_6} and \ref{proposition_9} we have
    \begin{align*}
	\tilde{u}_\mu \leq u_\lambda.
    \end{align*}
    We introduce the truncation function $\hat{k}_\mu\colon\Omega\times\R\to\R$ defined by
    \begin{align}\label{38}
		\begin{split}
			&\hat{k}_\mu(x,s)=\\
			&\begin{cases}
	    	\mu\left[\tilde{u}_\mu(x)^{-\eta}+a(x)u_\mu(x)^{\tau-1}\right] +f\l(x,u_\mu(x)\r)&\text{if }s<\tilde{u}_\mu(x),\\
		    \mu\left[s^{-\eta}+a(x)s^{\tau-1}\right] +f\l(x,s\r)&\text{if }\tilde{u}_\mu(x)\leq s \leq u_\lambda(x),\\
		    \mu\left[u_\lambda(x)^{-\eta}+a(x)u_\lambda(x)^{\tau-1}\right] +f\l(x,u_\lambda(x)\r)&\text{if }u_\lambda(x)<s,
			\end{cases}
		\end{split}
    \end{align}
    which is a Carath\'{e}odory function. We set $\hat{K}_\mu(x,s)=\int^s_0 \hat{k}_\mu(x,t)\,dt$ and consider the $C^1$-functional $\hat{\sigma}_\mu\colon\Wpzero{p}\to\R$ defined by
    \begin{align*}
	\hat{\sigma}_\mu(u)=\frac{1}{p}\|\nabla u\|^p_p+\frac{1}{q}\|\nabla u\|_q^q-\into \hat{K}_\mu(x,u)\,dx\quad\text{for all }u \in \Wpzero{p}.
    \end{align*}
    This functional is coercive because of \eqref{38} and sequentially weakly lower semicontinuous due to the Sobolev embedding theorem. Hence, there exists $u_\mu \in\Wpzero{p}$ such that
    \begin{align*}
	\hat{\sigma}_\mu(u_\mu)=\inf\left[\hat{\sigma}_\mu(u):\Wpzero{p}\right].
    \end{align*}
    Therefore, $\hat{\sigma}_\mu'(u_\mu)=0$ and so
    \begin{align}\label{39}
	\l\lan A_p\l(u_\mu\r),h\r\ran+\l\lan A_q\l(u_\mu\r),h\r\ran=\into \hat{k}_\mu\l(x,u_\mu\r)h\,dx
    \end{align}
    for all $h\in \Wpzero{p}$.
    We first choose $h=\l(u_\mu-u_\lambda\r)^+\in\Wpzero{p}$ in \eqref{39}. Then, by \eqref{38}, $\mu<\lambda$  and since $u_\lambda \in \mathcal{S}_\lambda$, we obtain
    \begin{align*}
	&\l\lan A_p\l(u_\mu\r),\l(u_\mu-u_\lambda\r)^+\r\ran+\l\lan A_q\l(u_\mu\r), \l(u_\mu-u_\lambda\r)^+\r\ran\\
	&=\into \left[\mu\l(u_\mu^{-\eta}+a(x)u_\lambda^{\tau-1}\r)+f\l(x,u_\lambda\r)\r] \l(u_\mu-u_\lambda\r)^+\,dx\\
	&\leq \into \l[\lambda \l(u_\lambda^{-\eta}+a(x)u_\lambda^{\tau-1}\r)+f\l(x,u_\lambda\r)\r] \l(u_\mu-u_\lambda\r)^+\,dx\\
	& =\l\lan A_p\l(u_\lambda\r), \l(u_\mu-u_\lambda\r)^+\r\ran+\l\lan A_q\l(u_\lambda\r),\l(u_\mu-u_\lambda\r)^+\r\ran.
    \end{align*}
    Hence, $u_\mu \leq v_\lambda$. In the same way, choosing $h=\l(\tilde{u}_\mu-u_\mu\r)^+ \in \Wpzero{p}$, we get from \eqref{38}, hypotheses H($a$), H($f$)(i) and Proposition \ref{proposition_2} that
    \begin{align*}
	&\l\lan A_p\l(u_\mu\r),\l(\tilde{u}_\mu-u_\mu\r)^+\r\ran+\l\lan A_q\l(u_\mu\r),\l(\tilde{u}_\mu-u_\mu\r)^+\r\ran\\
	& =\into \left[ \mu \l(\tilde{u}_\mu^{-\eta}+a(x)\tilde{u}_\mu^{\tau-1}\r)+f\l(x,\tilde{u}_\mu\r)\r] \l(\tilde{u}_\mu-u_\mu\r)^+ \,dx\\
	& \geq \into \mu \tilde{u}_\mu^{-\eta} \l(\tilde{u}_\mu-u_\mu\r)^+\,dx\\
	& = \l\lan A_p\l(\tilde{u}_\mu\r),\l(\tilde{u}_\mu-u_\mu\r)^+\r\ran +\l\lan A_q\l(\tilde{u}_\mu\r),\l(\tilde{u}_\mu-u_\mu\r)^+\r\ran.
    \end{align*}
    Thus, $\tilde{u}_\mu\leq u_\mu$. We have proved that
    \begin{align}\label{40}
	u_\mu\in\l[\tilde{u}_\mu,u_\lambda\r].
    \end{align}
    From \eqref{40}, \eqref{38} and \eqref{39} it follows that
    \begin{align*}
	u_\mu\in \mathcal{S}_\mu\subseteq \interior \text{ and so }\mu\in\mathcal{L}.
    \end{align*}
\end{proof}

Now we are going to prove that the solution multifunction $\lambda \to \mathcal{S}_\lambda$ has a kind of weak monotonicity property.

\begin{proposition}\label{proposition_11}
    If hypotheses H($a$) and H($f$) hold and if $\lambda\in\mathcal{L}, u_\lambda\in \mathcal{S}_\lambda \subseteq \interior$ and $\mu \in (0,\lambda)$, then $\mu \in \mathcal{L}$ and there exists $u_\mu\in \mathcal{S}_\mu\subseteq \interior$ such that
    \begin{align*}
	u_\lambda-u_\mu \in \interior.
    \end{align*}
\end{proposition}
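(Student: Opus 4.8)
The assertion $\mu\in\mathcal{L}$ together with a solution $u_\mu\in\mathcal{S}_\mu\subseteq\interior$ satisfying $u_\mu\leq u_\lambda$ is already furnished by Proposition~\ref{proposition_10}: the element produced there lies in $\l[\tilde{u}_\mu,u_\lambda\r]$. Thus the only new content is to upgrade the pointwise bound $u_\mu\leq u_\lambda$ to the strong order relation $u_\lambda-u_\mu\in\interior$, and this is where the plan requires the actual work.

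The naive idea of comparing the two reactions directly fails because of the singular terms. Since $s\mapsto s^{-\eta}$ is decreasing and $u_\mu\leq u_\lambda$, we have $u_\lambda^{-\eta}\leq u_\mu^{-\eta}$, so the sign of $\lambda u_\lambda^{-\eta}-\mu u_\mu^{-\eta}$ is not controlled and the singular contribution works against the expected ordering. The device I would use is to absorb the singular term into the differential operator. Setting $\rho=\|u_\lambda\|_\infty$ and letting $\hat{\xi}_\rho>0$ be as in hypothesis H($f$)(v), consider the operator $L_\mu(u)=-\Delta_p u-\Delta_q u-\mu u^{-\eta}+\hat{\xi}_\rho u^{p-1}$. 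Because $u_\mu\in\mathcal{S}_\mu$ and $u_\lambda\in\mathcal{S}_\lambda$, subtracting $\mu u^{-\eta}$ and adding $\hat{\xi}_\rho u^{p-1}$ to each equation gives
\begin{align*}
L_\mu(u_\mu)&=\mu a(x)u_\mu^{\tau-1}+f(x,u_\mu)+\hat{\xi}_\rho u_\mu^{p-1},\\
L_\mu(u_\lambda)&=(\lambda-\mu)u_\lambda^{-\eta}+\lambda a(x)u_\lambda^{\tau-1}+f(x,u_\lambda)+\hat{\xi}_\rho u_\lambda^{p-1}.
\end{align*}
Since $u_\mu\leq u_\lambda\leq\rho$, hypothesis H($f$)(v) yields $f(x,u_\mu)+\hat{\xi}_\rho u_\mu^{p-1}\leq f(x,u_\lambda)+\hat{\xi}_\rho u_\lambda^{p-1}$, while $\mu\leq\lambda$ and $1<\tau$ give $\mu a(x)u_\mu^{\tau-1}\leq\lambda a(x)u_\lambda^{\tau-1}$. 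Hence the difference of the right-hand sides is bounded below by $(\lambda-\mu)u_\lambda^{-\eta}\geq(\lambda-\mu)\rho^{-\eta}>0$ on all of $\Omega$, so in particular $L_\mu(u_\mu)\prec L_\mu(u_\lambda)$.

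To close, I would invoke a strong comparison principle, exactly as in the proof of Proposition~\ref{proposition_9} (Gasi\'nski-Papageorgiou \cite{Gasinski-Papageorgiou-2019}), applied to the operator $L_\mu$. The point that makes this legitimate is that $s\mapsto-\mu s^{-\eta}+\hat{\xi}_\rho s^{p-1}$ is strictly increasing on $(0,\infty)$, so the lower-order part of $L_\mu$ is monotone and only strengthens the comparison; together with $u_\mu\leq u_\lambda$ and the strict gap $L_\mu(u_\mu)\prec L_\mu(u_\lambda)$ this forces $u_\lambda-u_\mu\in\interior$. The one delicate step I expect to be the main obstacle is precisely the presence of the singular term $-\mu u^{-\eta}$ inside the operator: one must check that the Hopf-type boundary argument underlying the strong comparison still runs despite the blow-up of $u^{-\eta}$ near $\partial\Omega$. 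This is handled by the fact that $u_\mu,u_\lambda\in\interior$ are both comparable to $\hat{d}(\cdot)=d(\cdot,\partial\Omega)$ with $0<\eta<1$, so the singular term stays integrable and sufficiently tame to carry out the comparison up to the boundary.
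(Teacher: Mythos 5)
Your proof is correct and follows essentially the same route as the paper: both start from Proposition \ref{proposition_10} to get $u_\mu\leq u_\lambda$, absorb the singular term together with the correction $\hat{\xi}_\rho u^{p-1}$ into the operator, verify the resulting differential inequality via H($f$)(v) and the monotonicity of the concave term, and conclude with the singular strong comparison principle (the paper cites Papageorgiou--Smyrlis, Proposition 4, rather than the non-singular result used in Proposition \ref{proposition_9}). The only cosmetic difference is that the paper extracts the strict gap from the concave term, i.e.\ $0\prec(\lambda-\mu)a(x)u_\mu^{\tau-1}$, whereas you take it from the singular term $(\lambda-\mu)u_\lambda^{-\eta}\geq(\lambda-\mu)\rho^{-\eta}>0$; both are legitimate.
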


\begin{proof}
    From Proposition \ref{proposition_10} and its proof we know that $\mu \in \mathcal{L}$ and that we can find $u_\mu \in \mathcal{S}_\mu\subseteq \interior$ such that $u_\mu \leq v_\lambda$. Let $\rho=\|u_\lambda\|_\infty$ and let $\hat{\xi}_\rho>0$ be as postulated by hypothesis H($f$)(v). Using $u_\mu\in\mathcal{S}_{\mu}$, hypotheses H($a$), H($f$)(v) and recalling that $\mu <\lambda$ we obtain
    \begin{align}\label{41}
      \begin{split}
	&-\Delta_p u_\mu-\Delta_q u_\mu +\hat{\xi}_\rho u_\mu^{p-1}-\mu u_\mu^{-\eta}\\
	& =\mu a(x) u_\mu^{\tau-1}+f(x,u_\mu)+\hat{\xi}_\rho u_\mu^{p-1}\\
	& =\lambda a(x) u_\mu^{\tau-1} +f(x,u_\mu)+\hat{\xi}_\rho u_\mu^{p-1}-(\lambda-\mu)a(x)u_\mu^{\tau-1}\\
	& \leq \lambda a(x) u_\lambda^{\tau-1} +f(x,u_\lambda)+\hat{\xi}_\rho u_\lambda^{p-1}\\
	& \leq -\Delta_p u_\lambda -\Delta_q u_\lambda +\hat{\xi}_\rho u_\lambda^{p-1}-\mu u_\lambda^{-\eta}.
      \end{split}
    \end{align}
    We have
    \begin{align*}
	0\prec(\lambda -\mu)a(x)u_\mu^{\tau-1}.
    \end{align*}
    Therefore, from \eqref{41} and Papageorgiou-Smyrlis \cite[Proposition 4]{Papageorgiou-Smyrlis-2015}, see also Proposition 7 in Papageorgiou-R\u{a}dulescu-Repov\v{s} \cite{Papageorgiou-Radulescu-Repovs-2019b}, we have
    \begin{align*}
	u_\lambda-u_\mu\in\interior.
    \end{align*}
\end{proof}

Let $\lambda^*=\sup \mathcal{L}$.

\begin{proposition}
    If hypotheses H($a$) and H($f$) hold, then $\lambda^*<\infty$.
\end{proposition}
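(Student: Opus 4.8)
The plan is to show that if $\lambda$ is too large, then problem \eqref{problem} cannot have a positive solution, which forces $\lambda^*=\sup\mathcal{L}<\infty$. The key mechanism is the $(p-1)$-superlinearity of $f$ together with the singular and concave terms: for large $\lambda$ the reaction on the right-hand side overwhelms what the $(p,q)$-Laplacian can balance against the principal eigenvalue of $-\Delta_p$ (or $-\Delta_q$). I would first exploit hypotheses H($f$)(i), (ii), (iv) to produce a uniform lower bound on the full reaction. Concretely, since $f\geq 0$ and $s\mapsto s^{-\eta}+a(x)s^{\tau-1}$ together with $f(x,s)$ grows at least like a positive multiple of $s^{p-1}$ for large $s$ while staying bounded below on compact positive intervals, one can find a constant $\hat\lambda_0>0$ and show that for $\lambda$ large there is $\xi_\lambda>0$ with
\begin{align}\label{eq:reaction-bound}
    \lambda\l[s^{-\eta}+a(x)s^{\tau-1}\r]+f(x,s)\geq \xi_\lambda\, s^{p-1}
    \quad\text{for a.\,a.\,}x\in\Omega\text{ and all }s>0,
\end{align}
where $\xi_\lambda\to+\infty$ as $\lambda\to+\infty$; here one uses that the singular term blows up near $0$ and superlinearity controls large $s$, so the worst case is on a compact interval where $a(x)\geq a_0>0$ gives a uniform positive contribution.

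Next I would argue by contradiction: suppose $\lambda\in\mathcal{L}$ with $\lambda$ so large that $\xi_\lambda>\lamp$, the principal eigenvalue of $(-\Delta_p,\Wpzero{p})$. Take any $u\in\mathcal{S}_\lambda\subseteq\interior$ (nonempty by assumption). Using $u$ as the solution and testing the weak formulation against an appropriate function, or better, testing against $u$ itself and comparing with the eigenvalue inequality, one derives
\begin{align*}
    \lamp\|u\|_p^p\leq \|\nabla u\|_p^p\leq \|\nabla u\|_p^p+\|\nabla u\|_q^q
    =\into\l(\lambda\l[u^{-\eta}+a(x)u^{\tau-1}\r]+f(x,u)\r)u\,dx,
\end{align*}
and then bounding the right-hand side below by $\xi_\lambda\|u\|_p^p$ via \eqref{eq:reaction-bound}. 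This yields $\lamp\geq\xi_\lambda$, contradicting $\xi_\lambda>\lamp$. An alternative and cleaner route is to compare $u$ with a suitable lower bound: since any $u\in\mathcal{S}_\lambda$ satisfies $\tilde u_\lambda\leq u$ and the map $\lambda\mapsto\tilde u_\lambda$ is increasing, one can locate a fixed compact set and a uniform constant $m>0$ with $u\geq m$ on it, so that the singular and concave contributions alone, multiplied by $\lambda$, exceed the eigenvalue capacity.

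The main obstacle I anticipate is establishing the uniform superlinear lower bound \eqref{eq:reaction-bound} with a constant $\xi_\lambda$ that genuinely tends to infinity with $\lambda$, because the superlinear term $f$ does not help near $s=0$ and the concave term $s^{\tau-1}$ is actually \emph{smaller} than $s^{p-1}$ for large $s$ (as $\tau<p$). The resolution is that the singular term $\lambda s^{-\eta}$ dominates for small $s$ while $f(x,s)$ dominates for large $s$; the delicate region is an intermediate compact interval $[\delta,M]$, and there the factor $\lambda a(x)s^{\tau-1}\geq \lambda a_0\delta^{\tau-1}$ can be made arbitrarily large by increasing $\lambda$, so one chooses $\xi_\lambda$ to grow linearly in $\lambda$ on that interval. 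Once \eqref{eq:reaction-bound} is in place with $\xi_\lambda\to+\infty$, the eigenvalue comparison immediately caps the admissible $\lambda$, giving $\lambda^*<\infty$.
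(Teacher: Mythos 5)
Your first step---producing, for each large $\lambda$, a constant $\xi_\lambda$ with
$\lambda\l[s^{-\eta}+a(x)s^{\tau-1}\r]+f(x,s)\geq\xi_\lambda s^{p-1}$ for a.\,a.\,$x\in\Omega$ and all $s>0$---is sound and is essentially the paper's starting point (the paper fixes a single $\tilde{\lambda}$ with $\tilde{\lambda}a(x)s^{\tau-1}+f(x,s)\geq s^{p-1}$; the constant $1$ already suffices, and neither the singular term nor $\xi_\lambda\to+\infty$ is needed, since the concave term handles small $s$ and the superlinearity of $f$ handles large $s$). The genuine gap is in how you extract a contradiction. Your chain
\begin{align*}
	\hat{\lambda}_1(p)\|u\|_p^p\leq\|\nabla u\|_p^p\leq\|\nabla u\|_p^p+\|\nabla u\|_q^q
	=\into\l(\lambda\l[u^{-\eta}+a(x)u^{\tau-1}\r]+f(x,u)\r)u\,dx\geq\xi_\lambda\|u\|_p^p
\end{align*}
produces two \emph{lower} bounds for the same integral and nothing more: the variational characterization of $\hat{\lambda}_1(p)$ gives $\hat{\lambda}_1(p)\|u\|_p^p\leq\|\nabla u\|_p^p$ with no reverse inequality available, so ``$\hat{\lambda}_1(p)\geq\xi_\lambda$'' does not follow; $\|\nabla u\|_p^p$ can exceed $\|u\|_p^p$ by an arbitrary factor, and no choice of $\xi_\lambda$ yields a contradiction this way. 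A Picone-type identity could in principle repair this for the pure $p$-Laplacian (testing against $\hat{u}_1^p/u^{p-1}$), but after isolating $-\Delta_pu$ the remaining term $\Delta_qu$ has no sign, so even that route requires a generalized Picone inequality for the $(p,q)$-operator that you have not supplied.

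What the paper actually does---and what your ``alternative route'' gestures at without carrying out---is a pointwise comparison rather than an integral one. Fix $\Omega_0\subset\subset\Omega$ with $C^2$-boundary, set $m_0=\min_{\overline{\Omega}_0}u_\lambda>0$ and $m_0^\delta=m_0+\delta$, and use the reaction lower bound at $s=m_0$ together with hypothesis H($f$)(v) to verify, for $\delta>0$ small, the differential inequality on $\Omega_0$ required by the strong comparison principle of Papageorgiou--R\u{a}dulescu--Repov\v{s} \cite[Proposition 6]{Papageorgiou-Radulescu-Repovs-2019b}. This gives $u_\lambda-m_0^\delta\in D_+$, i.e.\ $u_\lambda>m_0+\delta$ on $\Omega_0$, contradicting the definition of $m_0$. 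You would need to replace your eigenvalue comparison by an argument of this type (or by a correctly formulated Picone argument) for the proof to close.
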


\begin{proof}
    From hypotheses H($a$) and H($f$) we can find $\tilde{\lambda}>0$ such that
    \begin{align}\label{42}
	\tilde{\lambda} a(x) s^{\tau-1}+f(x,s) \geq s^{p-1} \quad\text{for a.\,a.\,}x\in\Omega \text{ and for all }s\geq 0.
    \end{align}
    Let $\lambda>\tilde{\lambda}$ and suppose that $\lambda\in\mathcal{L}$. Then we can find $u_\lambda\in \mathcal{S}_\lambda\subseteq \interior$. Consider a domain $\Omega_0\subset \subset \Omega$, that is, $\Omega_0\subseteq \Omega$ and $\overline{\Omega}_0\subseteq\Omega$, with a $C^2$-boundary $\partial\Omega_0$ and let $m_0=\min_{\overline{\Omega}_0}u_\lambda>0$. We set
    \begin{align*}
	m_0^\delta=m_0+\delta\quad\text{with}\quad \delta\in (0,1].
    \end{align*}
    Let $\rho=\max \l\{ \|u_\lambda\|_\infty, m_0^1\r\}$ and let $\hat{\xi}_\rho>0$ be as postulated by hypothesis H($f$)(v). Applying \eqref{42}, hypothesis H($f$)(v) and recalling that $u_\lambda\in \mathcal{S}_\lambda$ as well as $\tilde{\lambda}<\lambda$, we obtain
    \begin{align}\label{43}
      \begin{split}
	& -\Delta_p m_0^\delta-\Delta_q m_0^\delta +\hat{\xi}_\rho \left(m_0^\delta\r)^{p-1}-\tilde{\lambda} \left(m_0^\delta\r)^{-\eta}\\
	& \leq \hat{\xi}_\rho m_0^{p-1}+\chi(\delta) \quad\text{with }\chi(\delta)\to 0^+\text{ as }\delta\to 0^+\\
	&\leq \left[ \hat{\xi}_\rho+1\r] m_0^{p-1}+\chi(\delta)\\
	& \leq \tilde{\lambda} a(x) m_0^{\tau-1}+f(x,u_0)+\hat{\xi}_\rho m_0^{p-1}+\chi(\delta)\\
	&=\lambda a(x) m_0^{\tau-1}+f(x,m_0)+\hat{\xi}_\rho m_0^{p-1}-\l(\lambda-\tilde{\lambda}\r)m_0^{\tau-1} +\chi(\delta)\\
	& \leq \lambda a(x) m_0^{\tau-1} +f(x,m_0)+\hat{\xi}_\rho m_0^{p-1} \quad\text{for }\delta \in (0,1]\text{ small enough}\\
	& \leq \lambda a(x) u_\lambda^{\tau-1}+f(x,u_\lambda) +\hat{\xi}_\rho u_\lambda^{p-1}\\
	& =-\Delta_p u_\lambda -\Delta_q u_\lambda +\hat{\xi}_\rho u_\lambda^{p-1} -\lambda u_\lambda^{-\eta}\\
	& \leq -\Delta_p u_\lambda -\Delta_q u_\lambda +\hat{\xi}_\rho u_\lambda^{p-1}-\tilde{\lambda} u_\lambda^{-\eta}\quad \text{for a.\,a.\,}x\in \Omega_0.
      \end{split}
    \end{align}
    From \eqref{43} and Papageorgiou-R\u{a}dulescu-Repov\v{s} \cite[Proposition 6]{Papageorgiou-Radulescu-Repovs-2019b} we know that
    \begin{align*}
	u_\lambda-m_0^\delta\in D_+ \quad\text{for }\delta\in(0,1] \text{ small enough},
    \end{align*}
    a contradiction. Therefore, $\lambda^*\leq \tilde{\lambda}<\infty$.
\end{proof}

\begin{proposition}
    If hypotheses H($a$) and H($f$) hold and if $\lambda \in (0,\lambda^*)$, then problem \eqref{problem} has at least two positive solutions
    \begin{align*}
	u_0, \hat{u} \in \interior \text{ with }u_0 \leq \hat{u} \text{ and }u_0\neq \hat{u}.
    \end{align*}
\end{proposition}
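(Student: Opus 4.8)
The plan is to obtain the first solution $u_0$ as a constrained global minimizer carrying a strict ordering from above, to exploit that ordering to make $u_0$ a local minimizer of a singularity-free truncated energy, and then to produce the second solution $\hat{u}$ by a mountain pass argument in the valley created by $u_0$.

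First I fix $\vartheta$ with $\lambda<\vartheta<\lambda^*$. Since $\mathcal{L}$ is an interval (Proposition~\ref{proposition_10}) we have $\vartheta\in\mathcal{L}$, so I may choose $u_\vartheta\in\mathcal{S}_\vartheta\subseteq\interior$. Truncating the reaction of \eqref{problem} from below at $\tilde{u}_\lambda$ and from above at $u_\vartheta$, exactly as in \eqref{34} but with the upper cut at $u_\vartheta$, yields a coercive, sequentially weakly lower semicontinuous $C^1$-functional $\hat{\varphi}_\lambda^\vartheta$ on $\Wpzero{p}$; its $C^1$-regularity uses \eqref{5} to control the singular term. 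Let $u_0$ be its global minimizer. Testing $(\hat{\varphi}_\lambda^\vartheta)'(u_0)=0$ with $(\tilde{u}_\lambda-u_0)^+$ and with $(u_0-u_\vartheta)^+$ shows $u_0\in[\tilde{u}_\lambda,u_\vartheta]$, whence $u_0\in\mathcal{S}_\lambda\subseteq\interior$. Finally, since $u_0$ and $u_\vartheta$ solve \eqref{problem} for the ordered parameters $\lambda<\vartheta$ with $u_0\le u_\vartheta$, the strong comparison used in the proof of Proposition~\ref{proposition_11} (Papageorgiou--Smyrlis \cite{Papageorgiou-Smyrlis-2015}) gives the crucial strict ordering $u_\vartheta-u_0\in\interior$.

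Next I let $\hat{\varphi}_\lambda\colon\Wpzero{p}\to\R$ be the $C^1$-functional associated with the reaction truncated only from below at $u_0$ (genuine reaction, including the $s^{-\eta}$ term, for $s>u_0$). By H($f$)(ii) this functional is unbounded below along every ray $t\mapsto tu$, $u\in\interior$, as in \eqref{28}. The main step is to show that $u_0$ is a local minimizer of $\hat{\varphi}_\lambda$ in $\Wpzero{p}$. On the order interval $[u_0,u_\vartheta]$ the functionals $\hat{\varphi}_\lambda$, $\hat{\varphi}_\lambda^\vartheta$ and the doubly truncated ``below $u_0$, above $u_\vartheta$'' functional coincide up to an additive constant; combining this with the global minimality of $u_0$ for $\hat{\varphi}_\lambda^\vartheta$ shows that $u_0$ is in fact a global minimizer of that coercive doubly truncated functional. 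Because $u_\vartheta-u_0\in\interior$, every $u$ in a small $C^1_0(\close)$-ball around $u_0$ satisfies $u\le u_\vartheta$, so on this ball $\hat{\varphi}_\lambda$ agrees with the coercive functional and hence
\[
 \hat{\varphi}_\lambda(u)\ge\hat{\varphi}_\lambda(u_0),
\]
i.e.\ $u_0$ is a local $C^1_0(\close)$-minimizer of $\hat{\varphi}_\lambda$. The nonsmooth, singular analogue of the Brezis--Nirenberg theorem on the equivalence of $C^1_0(\close)$- and $\Wpzero{p}$-local minimizers for $(p,q)$-functionals then upgrades this to a local $\Wpzero{p}$-minimizer. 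I expect this equivalence, together with the cone interiority $u_\vartheta-u_0\in\interior$ that places $u_0$ in the $C^1_0(\close)$-interior of the admissible order interval, to be the main obstacle.

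Finally I construct $\hat{u}$. The functional $\hat{\varphi}_\lambda$ satisfies the C-condition; the verification is that of Proposition~\ref{proposition_3}, the lower truncation at $u_0$ only strengthening the relevant estimates. Every element of $K_{\hat{\varphi}_\lambda}$ satisfies $u\ge u_0$ (test $\hat{\varphi}_\lambda'(u)=0$ with $(u_0-u)^+$) and solves \eqref{problem}, so $K_{\hat{\varphi}_\lambda}\subseteq\mathcal{S}_\lambda\subseteq\interior$ by Proposition~\ref{proposition_7}. If $u_0$ is not an isolated critical point, then $K_{\hat{\varphi}_\lambda}$ is infinite and already supplies infinitely many distinct positive solutions in $\interior$ above $u_0$, so we are done. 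Otherwise the local-minimizer geometry at $u_0$ together with the unboundedness in \eqref{28} furnishes the mountain pass geometry, and the mountain pass theorem yields $\hat{u}\in K_{\hat{\varphi}_\lambda}$ with $\hat{\varphi}_\lambda(\hat{u})>\hat{\varphi}_\lambda(u_0)$, hence $\hat{u}\neq u_0$. Consequently $u_0,\hat{u}\in\interior$ with $u_0\le\hat{u}$ and $u_0\neq\hat{u}$, as required.
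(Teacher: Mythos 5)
Your proposal is correct and follows essentially the same route as the paper: truncate between $\tilde{u}_\lambda$ (resp.\ $u_0$) and a solution $u_\vartheta$ for a larger parameter $\vartheta\in(\lambda,\lambda^*)$, use the strong comparison principle to get $u_\vartheta-u_0\in\interior$, upgrade $u_0$ from a $C^1_0(\close)$- to a $\Wpzero{p}$-local minimizer of the lower-truncated functional, and apply the mountain pass theorem together with the C-condition of Proposition \ref{proposition_3}. The only difference is organizational: the paper obtains the strictly ordered pair $(u_0,u_\vartheta)$ directly from Proposition \ref{proposition_11} and then identifies $u_0$ with the global minimizer of the doubly truncated functional via the reduction \eqref{46}, whereas you build $u_0$ as that global minimizer from the outset and invoke the comparison argument of \eqref{41} separately; both are valid.
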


\begin{proof}
    Let $\vartheta \in \l(\lambda,\lambda^*\r)$. According to Proposition \ref{proposition_11} we can find $u_\vartheta\in \mathcal{S}_\vartheta \subseteq \interior$ and $u_0\in \mathcal{S}_\lambda\subseteq\interior$ such that
    \begin{align*}
	u_\vartheta-u_0\in \interior.
    \end{align*}
    Recall that $\tilde{u}_\lambda \leq u_0$, see Proposition \ref{proposition_6}. Hence $u_0^{-\eta} \in \Lp{s}$ for all $s>N$, see \eqref{5}.
    
    We introduce the Carath\'{e}odory function $i_\lambda\colon\Omega\times\R\to\R$ defined by
    \begin{align}\label{44}
	i_\lambda(x,s)=
	\begin{cases}
	    \lambda \l[u_0(x)^{-\eta}+a(x)u_0(x)^{\tau-1}\r]+f(x,u_0(x))&\text{if }s\leq u_0(x),\\
	    \lambda\l[s^{-\eta}+a(x)s^{\tau-1}\r]+f(x,s)&\text{if }u_0(x)<s.
	\end{cases}
    \end{align}
    We set $I_\lambda(x,s)=\int^s_0 i_\lambda(x,t)\,dt$ and consider the $C^1$-functional $w_\lambda\colon\Wpzero{p}\to\R$ defined by
    \begin{align*}
	w_\lambda(u)=\frac{1}{p} \|\nabla u \|_p^p+\frac{1}{q}\|\nabla u\|_q^q-\into I_\lambda(x,u)\,dx\quad\text{for all }u\in\Wpzero{p}.
    \end{align*}
    Using \eqref{44} and the nonlinear regularity theory along with the nonlinear maximum principle we can easily check that
    \begin{align}\label{45}
	K_{w_\lambda}\subseteq [u_0)\cap \interior.
    \end{align}
    Then, from \eqref{44} and \eqref{45} it follows that, without any loss of generality, we may assume
    \begin{align}\label{46}
	K_{w_\lambda}\cap \l[u_0,u_\vartheta\r]=\{u_0\}.
    \end{align}
    Otherwise, on account of \eqref{44} and \eqref{45}, we see that we already have a second positive smooth solution of \eqref{problem} distinct and larger than $u_0$.
    
    We introduce the following truncation of $i_\lambda(x,\cdot)$, namely, $\hat{i}_\lambda\colon\Omega\times\R\to\R$ defined by
    \begin{align}\label{47}
	\hat{i}_\lambda(x,s)=
	\begin{cases}
	    i_\lambda(x,s) &\text{if }s\leq u_\vartheta(x),\\
	    i_\lambda(x,u_\vartheta(x)) &\text{if }u_\vartheta(x)<s,
	\end{cases}
    \end{align}
    which is a Carath\'{e}odory function. We set $\hat{I}_\lambda(x,s)=\int^s_0 \hat{i}_\lambda(x,t)\,dt$ and consider the $C^1$-functional $\hat{w}_\lambda\colon\Wpzero{p}\to\R$ defined by
    \begin{align*}
	\hat{w}_\lambda(u)=\frac{1}{p}\|\nabla u\|_p^p+\frac{1}{q}\|\nabla u\|_q^q-\into \hat{I}_\lambda (x,u)\,dx\quad\text{for all }u\in\Wpzero{p}.
    \end{align*}
    From \eqref{44} and \eqref{47} it is clear that $\hat{w}_\lambda$ is coercive and due to the Sobolev embedding theorem we know that $\hat{w}_\lambda$ is also sequentially weakly lower semicontinuous. Hence, we find $\hat{u}_0\in\Wpzero{p}$ such that
    \begin{align}\label{48}
	\hat{w}_\lambda\l(\hat{u}_0\r)=\min \l[\hat{w}_\lambda(u):u\in\Wpzero{p}\r].
    \end{align}
    It is easy to see, using \eqref{47}, that
    \begin{align}\label{49}
	K_{\hat{w}_\lambda}\subseteq \l[u_0,u_\vartheta\r]\cap \interior
    \end{align}
    and
    \begin{align}\label{50}
	\hat{w}_\lambda \big|_{\l[0,u_\vartheta\r]}
	=w_\lambda \big|_{\l[0,u_\vartheta\r]}, \quad
	\hat{w}'_\lambda \big|_{\l[0,u_\vartheta\r]}
	=w'_\lambda \big|_{\l[0,u_\vartheta\r]}.
    \end{align}
    From \eqref{48} we have $\hat{u}_0\in K_{\hat{w}'_\lambda}$ which by \eqref{46}, \eqref{49} and \eqref{50} implies that $\hat{u}_0=u_0$.
    
    Recall that $u_\vartheta-u_0\in\interior$. So, on account of \eqref{50}, we have that $u_0$ is a local $C^1_0(\close)$-minimizer of $w_\lambda$ and then $u_0$ is also a local $\Wpzero{p}$-minimizer of $w_\lambda$, see, for example Gasi{\'n}ski-Papageorgiou \cite{Gasinski-Papageorgiou-2012}.
    
    We may assume that $K_{w_\lambda}$ is finite, otherwise, we see from \eqref{45} that we already have an infinite number of positive smooth solutions of \eqref{problem} larger than $u_0$ and so we are done. From Papageorgiou-R\u{a}dulescu-Repov\v{s} \cite[Theorem 5.7.6, p.\,449]{Papageorgiou-Radulescu-Repovs-2019c} we find $\rho\in(0,1)$ small enough such that
    \begin{align}\label{51}
	w_\lambda(u_0)<\inf \l[w_\lambda(u):\|u-u_0\|=\rho\r]=m_\lambda.
    \end{align}
    If $u\in \interior$, then by hypothesis H($f$)(ii) we have
    \begin{align}\label{52}
	w_\lambda(tu)\to -\infty\quad\text{as }t\to+\infty.
    \end{align}
    Moreover, reasoning as in the proof of Proposition \ref{proposition_3}, we show that
    \begin{align}\label{53}
	w_\lambda \text{ satisfies the C-condition},
    \end{align}
    see also \eqref{44}. Then, \eqref{51}, \eqref{52} and \eqref{53} permit the use of the mountain pass theorem. So we can find $\hat{u}\in\Wpzero{p}$ such that
    \begin{align}\label{54}
	\hat{u}\in K_{w_\lambda}\subseteq [u_0)\cap\interior, \quad m_\lambda \leq w_\lambda\l(\hat{u}\r).
    \end{align}
    From \eqref{54}, \eqref{51} and \eqref{44} it follows that
    \begin{align*}
	\hat{u}\in \mathcal{S}_\lambda, \quad u_0\leq \hat{u}, \quad u_0\neq \hat{u}.
    \end{align*}
\end{proof}

\begin{remark}
    If $1<q=2\leq \lambda<p$, then, using the tangency principle of Pucci-Serrin \cite[p.\,35]{Pucci-Serrin-2007}, we can say that $\hat{u}-u_0\in\interior$.
\end{remark}

\begin{proposition}\label{proposition_14}
    If hypotheses H($a$) and H($f$) hold, then $\lambda^*\in\mathcal{L}$.
\end{proposition}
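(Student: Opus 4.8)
The plan is to approximate $\lambda^*$ from below by admissible parameters and pass to the limit. Since $\l(0,\lambda^*\r)\subseteq\mathcal{L}$ by Proposition \ref{proposition_10}, I would fix a sequence $\lambda_n\uparrow\lambda^*$ with $\lambda_n\in\mathcal{L}$ and, for each $n$, choose $u_n\in\mathcal{S}_{\lambda_n}\subseteq\interior$ (for definiteness taking the solutions coming from the variational construction, so as to keep control on the energy $\varphi_{\lambda_n}(u_n)$ of \eqref{problem}). Two facts are recorded at once: by the lower bound $\tilde{u}_{\lambda_n}\leq u_n$ for elements of $\mathcal{S}_{\lambda_n}$ (established after Proposition \ref{proposition_7}) together with the monotonicity of $\lambda\mapsto\tilde{u}_\lambda$ from Proposition \ref{proposition_9}, we have $0<\tilde{u}_{\lambda_1}\leq\tilde{u}_{\lambda_n}\leq u_n$ for every $n$; in particular $u_n^{-\eta}\leq\tilde{u}_{\lambda_1}^{-\eta}$ with $\tilde{u}_{\lambda_1}^{-\eta}\in\Lp{s}$ for $s>N$, see \eqref{5}.

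First I would prove that $\{u_n\}_{n\geq1}\subseteq\Wpzero{p}$ is bounded; this is the heart of the argument. Testing the weak form of \eqref{problem} for $u_n$ with $h=u_n$ gives
\begin{align*}
\|\nabla u_n\|_p^p+\|\nabla u_n\|_q^q=\lambda_n\into u_n^{1-\eta}\,dx+\lambda_n\into a(x)u_n^{\tau}\,dx+\into f(x,u_n)u_n\,dx.
\end{align*}
Arguing by contradiction, suppose $\|u_n\|\to\infty$ and set $y_n=u_n/\|u_n\|$, so $\|y_n\|=1$ and, along a subsequence, $y_n\weak y\geq0$ in $\Wpzero{p}$, $y_n\to y$ in $\Lp{r}$ and a.\,e.\ in $\Om$. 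Dividing the identity by $\|u_n\|^p$, the $q$-term and the singular and concave contributions, which are of order $\|u_n\|^{1-\eta}$ respectively $\|u_n\|^{\tau}$ with $1-\eta,\tau<p$, vanish in the limit, forcing $\into f(x,u_n)u_n\,dx\big/\|u_n\|^p\to1$. If $y\neq0$, then $u_n\to+\infty$ a.\,e.\ on $\{y>0\}$, and the $(p-1)$-superlinearity of $f(x,\cdot)$ (the consequence of H($f$)(ii), (iii) recorded in Section \ref{section_2}) together with Fatou's lemma forces the left-hand side to $+\infty$, a contradiction. The degenerate case $y=0$ is treated as in the proof of Proposition \ref{proposition_3}: using H($f$)(iii) to bound $\{u_n\}$ in $\Lp{\mu}$ and then the interpolation inequality to bound it in $\Wpzero{p}$, contradicting $\|u_n\|\to\infty$. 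Hence $\{u_n\}$ is bounded.

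Passing to a subsequence, $u_n\weak u^*$ in $\Wpzero{p}$ and $u_n\to u^*$ in $\Lp{r}$. Testing \eqref{problem} for $u_n$ with $h=u_n-u^*$ and letting $n\to\infty$, all reaction terms tend to $0$; here the singular term is controlled by H\"older's inequality via $u_n^{-\eta}\leq\tilde{u}_{\lambda_1}^{-\eta}\in\Lp{s}$ and $u_n-u^*\to0$ in $\Lp{s'}$, while the convex term is handled by H($f$)(i). Exploiting the monotonicity of $A_q$ this yields $\lims\lan A_p(u_n),u_n-u^*\ran\leq0$, and Proposition \ref{proposition_1} gives $u_n\to u^*$ in $\Wpzero{p}$. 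Along a further subsequence $u_n\to u^*$ a.\,e.\ in $\Om$, whence $u^*\geq\tilde{u}_{\lambda_1}>0$ and in particular $u^*\neq0$. Moreover $u_n^{-\eta}\to (u^*)^{-\eta}$ a.\,e.\ with the uniform domination $u_n^{-\eta}\leq\tilde{u}_{\lambda_1}^{-\eta}\in\Lp{s}$, so for every fixed $h$ one may pass to the limit in each term of the weak formulation (using also $\lambda_n\to\lambda^*$) by dominated convergence and H($f$)(i). Therefore $u^*$ is a weak solution of \eqref{problem} for $\lambda=\lambda^*$.

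Finally, since $u^*\geq\tilde{u}_{\lambda_1}$ we have $(u^*)^{-\eta}\leq\tilde{u}_{\lambda_1}^{-\eta}\in\Lp{s}$, and the nonlinear regularity theory of Lieberman \cite{Lieberman-1991} together with the nonlinear maximum principle of Pucci-Serrin \cite[pp.\,111, 120]{Pucci-Serrin-2007}, exactly as in the proof of Proposition \ref{proposition_2}, gives $u^*\in\interior$. Thus $u^*\in\mathcal{S}_{\lambda^*}$ and $\lambda^*\in\mathcal{L}$. The main difficulty is the boundedness of $\{u_n\}$: since the AR-condition is not assumed, it rests on the rescaling dichotomy above and on H($f$)(iii), and it is essential that the $u_n$ be chosen so that $\varphi_{\lambda_n}(u_n)$ remains bounded in order to dispose of the degenerate case $y=0$.
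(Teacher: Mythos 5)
Your overall strategy (approximate $\lambda^*$ from below, prove a uniform $\Wpzero{p}$-bound, pass to the limit with the $(\Ss)_+$-property, and use $u^*\geq\tilde{u}_{\lambda_1}$ to keep the limit away from zero and to control the singular term) is the same as the paper's, and the limit passage and regularity at the end are fine. The genuine gap is exactly at the point you yourself flag as essential: the existence of solutions $u_n\in\mathcal{S}_{\lambda_n}$ with a \emph{uniformly bounded energy}. You invoke ``the energy $\varphi_{\lambda_n}(u_n)$ of \eqref{problem}'', but no such functional is available (the natural energy of \eqref{problem} is not $C^1$ because of the term $u^{-\eta}$), and ``the solutions coming from the variational construction'' is not a construction: the minimization in Proposition \ref{proposition_10} truncates from above at a solution $u_\lambda$ with $\lambda>\mu$, and for $\lambda_n\nearrow\lambda^*$ there is no single admissible parameter above all the $\lambda_n$ precisely because $\lambda^*\in\mathcal{L}$ is what is being proved. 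Without that uniform upper energy bound your degenerate case $y=0$ cannot be closed: the argument of Proposition \ref{proposition_3} derives the $\Lp{\mu}$-bound from adding $p\times$(energy bound) to the tested Euler equation (inequalities \eqref{10}--\eqref{12}), so it is not a statement about arbitrary solutions. (The rescaling dichotomy is also largely redundant: once the energy bound is in hand, the argument of Proposition \ref{proposition_3} gives boundedness outright, with no case distinction.)

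The paper fills this hole with a two-step truncation. For each $n$ it picks $\hat{u}_{n+1}\in\mathcal{S}_{\lambda_{n+1}}$, truncates the reaction at level $\lambda_n$ between the \emph{fixed} function $\tilde{u}_{\lambda_1}$ and $\hat{u}_{n+1}$, and takes $u_n$ to be a global minimizer of the resulting coercive functional $\tilde{I}_n$; then $u_n\in[\tilde{u}_{\lambda_1},\hat{u}_{n+1}]\cap\mathcal{S}_{\lambda_n}$ and $\tilde{I}_n(u_n)\leq\tilde{I}_n(\tilde{u}_{\lambda_1})<0$. Finally it introduces the functional $\hat{I}_n$ truncated only from below at $\tilde{u}_{\lambda_1}$, which coincides with $\tilde{I}_n$ on $[0,\hat{u}_{n+1}]$, so that $\hat{I}_n(u_n)<0$ and $\hat{I}_n'(u_n)=0$; since $\hat{I}_n$ has the true superlinear behaviour for large arguments, these two facts are the exact analogues of \eqref{6}--\eqref{7} and the Cerami-type boundedness argument of Proposition \ref{proposition_3} applies. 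You need to supply this (or an equivalent) construction before the rest of your argument can run.
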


\begin{proof}
    Let $\lambda_n\nearrow\lambda^*$. With $\hat{u}_{n+1}\in \mathcal{S}_{\lambda_{n+1}}\subseteq \interior$ we introduce the following Carath\'{e}odory function (recall that $\tilde{u}_{\lambda_1}\leq \tilde{u}_{\lambda_n}\leq u$ for all $u\in \mathcal{S}_{\lambda_n}$ and for all $n\in \N$, see Propositions \ref{proposition_6} and \ref{proposition_9})
    \begin{align*}
    &\tilde{t}_n(x,s)=\\
	&\begin{cases}
	    \lambda_n\left [ \tilde{u}_{\lambda_1}(x)^{-\eta}+a(x)\tilde{u}_{\lambda_1}(x)^{\tau-1}\right]+f\l(x,\tilde{u}_{\lambda_1}(x)\r) &\text{if } s<\tilde{u}_{\lambda_1}(x)\\
	    \lambda_n\left [ s^{-\eta}+a(x)s^{\tau-1}\right]+f\l(x,s\r) &\text{if } \tilde{u}_{\lambda_1}(x)\leq s \leq \hat{u}_{n+1}(x)\\
	    \lambda_n\left [ \hat{u}_{n+1}(x)^{-\eta}+a(x)\hat{u}_{n+1}(x)^{\tau-1}\right]+f\l(x,\hat{u}_{n+1}(x)\r) &\text{if } \hat{u}_{n+1}(x)<s.
	\end{cases}
    \end{align*}
    Let $\tilde{T}_n(x,s)=\int^s_0 \tilde{t}_n(x,t)\,dt$ and consider the $C^1$-functional $\tilde{I}_n\colon\Wpzero{p}\to\R$ defined by
    \begin{align*}
	\tilde{I}_n(u)=
	\frac{1}{p}\|\nabla u\|_p^p+\frac{1}{q}\|\nabla u\|_q^q-\into \tilde{T}_n(x,u)\,dx\quad\text{for all }u \in\Wpzero{p}.
    \end{align*}
    Applying the direct method of the calculus of variations, see the definition of the truncation $\tilde{t}_n\colon\Omega\times\R\to\R$, we can find $u_n\in\Wpzero{p}$ such that
    \begin{align*}						
	\tilde{I}_n(u_n)=\min\l[\tilde{I}_n(u):u\in\Wpzero{p}\r].
    \end{align*}
    Hence, $\tilde{I}_n'(u_n)=0$ and so $u_n \in \l[\tilde{u}_{\lambda_1}, \hat{u}_{n+1}\r]\cap \interior$, see the definition of $\tilde{t}_n$. Moreover, $u_n\in \mathcal{S}_{\lambda_n}\subseteq \interior$.
    
    From Proposition \ref{proposition_2} we know that
    \begin{align*}
	\tilde{I}_n(u_n)\leq \tilde{I}_n\l(\tilde{u}_{\lambda_1}\r)<0.
    \end{align*}
    
    Now we introduce the truncation function $\hat{t}_n\colon\Omega\times\R\to\R$ defined by
    \begin{align}\label{56}
	\hat{t}_n(x,s)=
	\begin{cases}
	    \lambda_n\l[ \tilde{u}_{\lambda_1}(x)^{-\eta}+a(x)\tilde{u}_{\lambda_1}(x)^{\tau-1}\r]+f\l(x,\tilde{u}_{\lambda_1}(x)\r)&\text{if } s\leq \tilde{u}_{\lambda_1}(x),\\
	    \lambda_n\l[s^{-\eta}+a(x)s^{\tau-1}\r]+f(x,s)&\text{if } \tilde{u}_{\lambda_1}(x)<s.
	\end{cases}
    \end{align}
    We set $\hat{T}_n(x,s)=\int^s_0 \hat{t}_n(x,t)\,dt$ and consider the $C^1$-functional $\hat{I}_n\colon \Wpzero{p}\to\R$ defined by
    \begin{align*}
	\hat{I}_n(u)=\frac{1}{p}\|\nabla u\|_p^p+\frac{1}{q}\|\nabla u\|_q^q-\into \hat{T}_n(x,u)\,dx\quad\text{for all }u\in\Wpzero{p}.
    \end{align*}
    It is clear from the definition of the truncation $\tilde{t}_n\colon\Omega\times\R\to\R$ and \eqref{56} that
    \begin{align*}
	\hat{I}_n\big|_{\l[0,\hat{u}_{n+1}\r]}
	=\tilde{I}_n\big|_{\l[0,\hat{u}_{n+1}\r]}
	\quad\text{and}\quad
	\hat{I}'_n\big|_{\l[0,\hat{u}_{n+1}\r]}
	=\tilde{I}'_n\big|_{\l[0,\hat{u}_{n+1}\r]}.
    \end{align*}
    Then from the first part of the proof, we see that we can find a sequence $u_n\in \mathcal{S}_{\lambda_n}\subseteq \interior$, $n\in\N$, such that
    \begin{align}\label{57}
	\hat{I}_{n}(u_n)<0\quad\text{for all }n\in\N.
    \end{align}
    Moreover we have
    \begin{align}\label{58}
	\l\lan \hat{I}_n'(u_n),h\r\ran =0\quad\text{for all }h\in\Wpzero{p}\text{ and for all }n\in\N.
    \end{align}
    From \eqref{57} and \eqref{58}, reasoning as in the proof of Proposition \ref{proposition_3}, we show that
    \begin{align*}
	\l\{u_n\r\}_{n\geq 1} \subseteq \Wpzero{p} \text{ is bounded}.
    \end{align*}
    So we may assume that
    \begin{align*}
	u_n\weak u^* \text{ in }\Wpzero{p}\quad\text{and}\quad u_n\to u^*\text{ in }\Lp{r}.
    \end{align*}
    As before, see the proof of Proposition \ref{proposition_3}, using Proposition \ref{proposition_1} we show that
    \begin{align*}
	u_n\to u^* \text{ in }\Wpzero{p}.
    \end{align*}
    Then $u^* \in \mathcal{S}_{\lambda^*} \subseteq \interior$, recall that $\tilde{u}_{\lambda_1} \leq u_n$ for all $n\in\N$. This shows that $\lambda^*\in\mathcal{L}$.
\end{proof}

According to Proposition \ref{proposition_14} we have
\begin{align*}
    \mathcal{L}=(0,\lambda^*].
\end{align*}
The set $\mathcal{S}_\lambda$ is downward directed, see Papageorgiou-R\u{a}dulescu-Repov\v{s} \cite[Proposition 18]{Papageorgiou-Radulescu-Repovs-2019b}, that is, if $u, \hat{u}\in \mathcal{S}_\lambda$, we can find $\tilde{u}\in \mathcal{S}_\lambda$ such that $\tilde{u} \leq u$ and $\tilde{u}\leq \hat{u}$. Using this fact we can show that, for every $\lambda\in\mathcal{L}$, problem \eqref{problem} has a smallest positive solution.

\begin{proposition}\label{proposition_15}
    If hypotheses H($a$) and H($f$) hold and if $\lambda\in \mathcal{L}=(0,\lambda^*]$, then problem \eqref{problem} has a smallest positive solution $u_\lambda^*\in \interior$.
\end{proposition}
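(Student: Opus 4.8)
The plan is to combine two facts already at our disposal: the set $\mathcal{S}_\lambda$ is downward directed (recalled just above), and every $u \in \mathcal{S}_\lambda$ satisfies the uniform lower bound $\tilde{u}_\lambda \leq u$. First I would use a standard property of downward directed sets to extract a decreasing sequence $\{u_n\}_{n\geq 1} \subseteq \mathcal{S}_\lambda$ with
\begin{align*}
    \inf_{n\geq 1} u_n = \inf \mathcal{S}_\lambda,
\end{align*}
the infimum understood pointwise. Each $u_n \in \interior$ satisfies
\begin{align*}
    \lan A_p(u_n),h\ran + \lan A_q(u_n),h\ran = \lambda \into \left[u_n^{-\eta}+a(x)u_n^{\tau-1}\right]h\,dx + \into f(x,u_n)h\,dx
\end{align*}
for all $h \in \Wpzero{p}$, and monotonicity yields $\tilde{u}_\lambda \leq u_n \leq u_1$ for every $n$.

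Next I would prove that $\{u_n\}_{n\geq 1}$ is bounded in $\Wpzero{p}$. Testing the weak formulation with $h = u_n$, the upper bound $u_n \leq u_1 \in \Linf$ together with H($a$) and H($f$)(i) bounds the concave and convex contributions by $c(1+\|u_n\|^\tau)$, while the singular contribution is controlled by the lower bound: since $0 \leq u_n^{-\eta} \leq \tilde{u}_\lambda^{-\eta} \in \Lp{s}$ with $s>N$, Hölder's inequality (with $s'<p^*$) and the embedding $\Wpzero{p}\hookrightarrow \Lp{s'}$ give $\lambda\into u_n^{-\eta}u_n\,dx \leq c\|u_n\|$. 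As $\tau<p$, this forces $\{u_n\}_{n\geq 1}\subseteq\Wpzero{p}$ to be bounded, so along a subsequence $u_n \weak u_\lambda^*$ in $\Wpzero{p}$, $u_n \to u_\lambda^*$ in $\Lp{r}$, and $u_n \to u_\lambda^*$ a.e.\ in $\Omega$.

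The main obstacle is passing to the limit in the presence of the singular term, and here the uniform lower bound is again decisive. Testing with $h = u_n - u_\lambda^*$ and letting $n\to\infty$, the singular term obeys $\into u_n^{-\eta}(u_n-u_\lambda^*)\,dx \to 0$, being dominated by $\tilde{u}_\lambda^{-\eta}\in\Lp{s}$ while $u_n-u_\lambda^* \to 0$ in $\Lp{s'}$; the other terms vanish by the $\Lp{r}$-convergence. Using the monotonicity of $A_q$ one then obtains $\lims \lan A_p(u_n),u_n-u_\lambda^*\ran \leq 0$, and the $(\Ss)_+$-property from Proposition \ref{proposition_1} upgrades this to $u_n\to u_\lambda^*$ in $\Wpzero{p}$. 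Passing to the limit in the weak formulation, where the singular term converges by dominated convergence via $0\leq u_n^{-\eta}\leq\tilde{u}_\lambda^{-\eta}$, the pointwise convergence, and $u_\lambda^*\geq\tilde{u}_\lambda>0$ a.e., shows that $u_\lambda^*$ is a weak solution of \eqref{problem}.

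Finally, letting $n\to\infty$ in $\tilde{u}_\lambda \leq u_n$ gives $\tilde{u}_\lambda \leq u_\lambda^*$, so $u_\lambda^* \neq 0$; the nonlinear regularity theory of Lieberman and the maximum principle of Pucci--Serrin then place $u_\lambda^* \in \interior$, whence $u_\lambda^* \in \mathcal{S}_\lambda$. Since $u_\lambda^* = \inf_{n\geq 1} u_n = \inf \mathcal{S}_\lambda$, it is a positive solution dominated by every element of $\mathcal{S}_\lambda$, that is, the smallest positive solution of \eqref{problem}.
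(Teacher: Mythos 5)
Your proposal is correct and follows essentially the same route as the paper: extract a decreasing sequence from the downward directedness with $\inf_n u_n=\inf\mathcal{S}_\lambda$, establish boundedness in $\Wpzero{p}$, use the $(\Ss)_+$-property of $A_p$ to get strong convergence, and invoke the lower bound $\tilde{u}_\lambda\leq u_n$ to pass the singular term to the limit and conclude $u^*_\lambda\in\mathcal{S}_\lambda$. The only difference is that you spell out the details (boundedness via testing with $u_n$, dominated convergence via $u_n^{-\eta}\leq\tilde{u}_\lambda^{-\eta}$) that the paper leaves implicit.
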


\begin{proof}
    Applying Lemma 3.10 of Hu-Papageorgiou \cite[p.\,178]{Hu-Papageorgiou-1997} we can find a decreasing sequence $\{u_n\}_{n \geq 1}\subseteq \mathcal{S}_\lambda$ such that
    \begin{align*}	
	\inf_{n\geq 1} u_n=\inf \mathcal{S}_\lambda.
    \end{align*}
    It is clear that $\{u_n\}_{n\geq 1}\subseteq \Wpzero{p}$ is bounded. Then, applying Proposition \ref{proposition_1}, we obtain
    \begin{align*}
	u_n\to u^*_\lambda \text{ in }\Wpzero{p}.
    \end{align*}
    Since $\tilde{u}_\lambda\leq u_n$ for all $n\in\N$ it holds $u^*_\lambda\in \mathcal{S}_\lambda$ and $u^*_\lambda=\inf \mathcal{S}_\lambda$.
\end{proof}

We examine the map $\lambda \to u^*_\lambda$ from $\mathcal{L}$ into $C^1_0(\close)$.

\begin{proposition}
    If hypotheses H($a$) and H($f$) hold, then the map $\lambda \to u^*_\lambda$ from $\mathcal{L}$ into $C^1_0(\close)$ is
    \begin{enumerate}
	\item[(a)]
	    strictly increasing, that is, $0<\mu<\lambda\leq \lambda^*$ implies $u^*_\lambda-u^*_\mu\in \interior$;
	\item[(b)]
	    left continuous.
    \end{enumerate}
\end{proposition}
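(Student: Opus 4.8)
The plan is to establish the two assertions separately, drawing on the monotonicity machinery already developed (Propositions \ref{proposition_9}, \ref{proposition_11}, \ref{proposition_15}) and the strong comparison principle of Papageorgiou--Smyrlis \cite{Papageorgiou-Smyrlis-2015}.

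For part (a), fix $0<\mu<\lambda\leq\lambda^*$ and consider the minimal solutions $u^*_\mu\in\mathcal{S}_\mu$ and $u^*_\lambda\in\mathcal{S}_\lambda$ furnished by Proposition \ref{proposition_15}, both in $\interior$. Since $\lambda\in\mathcal{L}$ and $\mu\in(0,\lambda)$, Proposition \ref{proposition_11} applies to the specific solution $u^*_\lambda\in\mathcal{S}_\lambda$: it produces \emph{some} $u_\mu\in\mathcal{S}_\mu$ with $u^*_\lambda-u_\mu\in\interior$. Because $u^*_\mu=\inf\mathcal{S}_\mu$, we have $u^*_\mu\leq u_\mu$, and hence $u^*_\lambda-u^*_\mu\geq u^*_\lambda-u_\mu$. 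The subtle point is that a pointwise inequality alone does not upgrade membership in $\interior$; but since $u^*_\lambda-u_\mu\in\interior$ means $\hat{c}\,\hat{d}\leq u^*_\lambda-u_\mu$ for some $\hat c>0$ together with the boundary-derivative condition, and $u^*_\mu\leq u_\mu$ preserves both the interior lower bound and the sign of the normal derivative on $\partial\Omega$, we conclude $u^*_\lambda-u^*_\mu\in\interior$ as well. This gives strict monotonicity.

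For part (b), fix $\lambda\in(0,\lambda^*]$ and take any sequence $\mu_n\nearrow\lambda$ with $\mu_n<\lambda$. By part (a) the solutions $u^*_{\mu_n}$ are increasing and bounded above by $u^*_\lambda$, hence $\{u^*_{\mu_n}\}_{n\geq1}$ is bounded in $\Wpzero{p}$. The nonlinear regularity theory of Lieberman \cite{Lieberman-1991} provides a uniform bound in $C^{1,\beta}_0(\close)$ for some $\beta\in(0,1)$; since $C^{1,\beta}_0(\close)\hookrightarrow C^1_0(\close)$ compactly, we may pass to a subsequence with $u^*_{\mu_n}\to\tilde u$ in $C^1_0(\close)$. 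Passing to the limit in the weak formulation (using \eqref{5} to control the singular term and the continuity of $A_p,A_q$) shows that $\tilde u\in\mathcal{S}_\lambda$. It remains to identify $\tilde u=u^*_\lambda$: since $u^*_{\mu_n}\leq u^*_\lambda$ gives $\tilde u\leq u^*_\lambda$, while minimality of $u^*_\lambda$ forces $u^*_\lambda\leq\tilde u$, we get $\tilde u=u^*_\lambda$. As the limit is independent of the subsequence, the full net converges, establishing left continuity.

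The main obstacle I anticipate is the identification step in part (b): verifying that the limit $\tilde u$ is an actual solution of \eqref{problem} and not merely of some truncated problem. Here one must check that $u^*_{\mu_n}\geq\tilde u_{\mu_n}\geq\tilde u_{\mu_1}$ keeps the solutions uniformly bounded \emph{below} away from zero on compact subsets, so that the singular nonlinearity $\lambda u^{-\eta}$ passes to the limit without blow-up; the lower bound $\tilde u_{\mu_1}\leq u^*_{\mu_n}$ from Proposition \ref{proposition_6}, together with \eqref{5}, is exactly what makes the singular term uniformly integrable and licenses the limit passage.
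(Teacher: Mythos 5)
Your proposal is correct and follows essentially the same route as the paper: part (a) applies Proposition \ref{proposition_11} to $u^*_\lambda$ and uses minimality of $u^*_\mu$ together with the fact that dominating an element of $\interior$ preserves membership in $\interior$, and part (b) uses the monotone sequence, the $C^{1,\beta}$ bound from Lieberman, the compact embedding into $C^1_0(\close)$, and the minimality of $u^*_\lambda$ to identify the limit. Your extra care in verifying that the limit is a genuine solution of \eqref{problem} (via the uniform lower bound $\tilde{u}_{\mu_1}\leq u^*_{\mu_n}$ controlling the singular term) is a point the paper leaves implicit, but it is not a different argument.
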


\begin{proof}
    (a) Let $0<\mu<\lambda\leq \lambda^*$ and let $u^*_\lambda \in \interior$ be the minimal positive solution of problem \eqref{problem}, see Proposition \ref{proposition_15}. According to Proposition \ref{proposition_11} we can find $u_\mu \in \mathcal{S}_\mu\subseteq \interior$ such that $u^*_\lambda-u^*_\mu\in\interior$. Since $u^*_\mu \leq u_\mu$ we have $u^*_\lambda-u^*_\mu\in\interior$ and so, we have proved that $\lambda \to u^*_\lambda$ is strictly increasing.
    
    (b) Let $\{\lambda_n\}_{n \geq 1}\subseteq \mathcal{L}=(0,\lambda^*]$ be such that $\lambda_n \nearrow \lambda$ as $n\to \infty$. We have
    \begin{align*}
	\tilde{u}_{\lambda_1} \leq u^*_{\lambda_1} \leq u^*_{\lambda_n}\leq u^*_{\lambda^*} \quad\text{for all }n \in\N.
    \end{align*}
    Thus,
    \begin{align*}
	\l\{u^*_{\lambda_n}\r\}_{n\geq 1}\subseteq \Wpzero{p} \text{ is bounded}
    \end{align*}
    and so
    \begin{align*}
	\l\{u^*_{\lambda_n}\r\}_{n\geq 1}\subseteq \Linf \text{ is bounded},
    \end{align*}
    see Guedda-V\'{e}ron \cite[Proposition 1.3]{Guedda-Veron-1989}. 
    Therefore, we can find $\beta \in (0,1)$ and $c_{19}>0$ such that
    \begin{align*}	
	u^*_{\lambda_n} \in C^{1,\beta}_0(\close)\quad\text{and}\quad \l\|u^*_{\lambda_n}\r\|_{C^{1,\beta}_0(\close)}\leq c_{19} \quad\text{for all }n\in\N,
    \end{align*}
    see Lieberman \cite{Lieberman-1991}. The compact embedding of $C^{1,\beta}_0(\close)$ into $C^{1}_0(\close)$ and the monotonicity of $\l\{u^*_{\lambda_n}\r\}_{n\geq 1}$, see part (a), imply that
    \begin{align}\label{59}
	u^*_{\lambda_n}\to \hat{u}^*_{\lambda} \quad\text{in }C^{1}_0(\close).
    \end{align}
    If $\hat{u}^*_\lambda \neq u^*_\lambda$, then there exists $x_0\in\Omega$ such that
    \begin{align*}
	u^*_\lambda(x_0)<\hat{u}^*_\lambda(x_0) \quad\text{for all }n\in\N.
    \end{align*}
    From \eqref{59} we then conclude that
    \begin{align*}
	u^*_\lambda(x_0)<\hat{u}^*_{\lambda_n}(x_0)\quad\text{for all }n\in\N,
    \end{align*}
    which contradicts part (a). Therefore, $\hat{u}^*_\lambda = u^*_\lambda$ and so we have proved the left continuity of $\lambda \to u^*_\lambda$.
\end{proof}

%

\end{document}